\newtheorem{theorem}{Theorem}
\newtheorem{lemma}[theorem]{Lemma}
\newtheorem{corollary}[theorem]{Corollary}
\newtheorem{proposition}[theorem]{Proposition}
\theoremstyle{remark}
\newtheorem{example}{Example}[section]
\renewcommand{\Pr}{{\mathbb P}}
\newcommand{\E}{{\mathbb E}}
\newcommand{\R}{{\mathbb R}}
\newcommand{\Z}{{\mathbb Z}}
\begin{document}
\begin{center}
  \Large\textbf{Tail asymptotics for the supercritical\\
  Galton--Watson process in the heavy-tailed case
  \footnote{Supported by DFG}}
\end{center}

 \begin{center}
   Denis Denisov, Dmitry Korshunov, and Vitali Wachtel
 \end{center}

 \begin{center}
   \textit{University of Manchester, Sobolev Institute of Mathematics and University of Munich}
 \end{center}

\begin{abstract}
As well known, for a supercritical Galton--Watson process $Z_n$
whose offspring distribution has mean $m>1$,
the ratio $W_n:=Z_n/m^n$
has a.s. limit, say $W$. We study tail behaviour of
the distributions of $W_n$ and $W$ in the case where
$Z_1$ has heavy-tailed distribution, that is,
$\E e^{\lambda Z_1}=\infty$ for every $\lambda>0$.
We show how different types of distributions of $Z_1$ lead
to different asymptotic behaviour of the tail of $W_n$ and $W$.
We  describe the most likely way how large values of the process
occur.

{\it Keywords}:
supercritical Galton--Watson process,
martingale limit,
large deviations,
heavy-tailed distribution,
subexponential distribution,
square-root insensitive distribution,
Weibull type distribution

{\it AMS 2010 subject classification}:
Primary: 60J80; Secondary: 60F10, 60G70
\end{abstract}

\section{Introduction}
\label{sec:introduction}

Let $Z_n$ be a supercritical Galton--Watson process
with $Z_0=1$, $m:=\E Z_1>1$. By definition,
$$
Z_{n+1}=\sum_{i=1}^{Z_n}\xi_i^{(n)},
$$
where $\xi_i^{(n)}$, $i$, $n=0$, $1$, \ldots, are
independent identically distributed random variables
with distribution $F$ on $\Z^+=\{0,1,2,\ldots\}$;
by $\overline F(x)$ we denote the tail of $F$,
$\overline F(x):=\Pr\{\xi>x\}$.

Put $W_n:=Z_n/m^n$. As well known (see, e.g., \cite[Theorem 1.6.1]{AN})
$W_n\to W$ a.s. as $n\to\infty$. If $\E\xi\log\xi<\infty$
then $\E W=1$, so $\Pr\{W>0\}>0$, see \cite[Theorem 1.10.1]{AN}.

Our goal is to consider asymptotic probabilities for the martingale
sequence $\{W_n\}$ and for its limit $W$. More precisely, we are going
to find asymptotics for $\Pr\{W_n>x\}$
as $x\to\infty$ in the whole range of $n\ge 1$.

The tail-behaviour of the martingale limit is one of the classical
problems in the theory of supercritical Galton-Watson processes. The
study of $\Pr\{W>x\}$ has been initiated by Harris \cite{H48} who showed
that if $\xi$ is bounded, then
$$
\log \E e^{uW}=u^\gamma H(u)+O(1)\quad\mbox{as }u\to\infty,
$$
where $H$ is a positive multiplicatively periodic function and $\gamma$ is
defined by the equality $m^\gamma=\max\{k:\,\Pr\{\xi=k\}>0\}$. This
information on the generating function can be translated into asymptotics
of tail-probabilities. It was done by Biggins and Bingham \cite{BB}:
\begin{equation}\label{HBB}
\log \Pr\{W>x\}\sim -x^{\gamma/(\gamma-1)} M(x),
\end{equation}
where $M$ is also a positive multiplicatively periodic function;
hereinafter we write $f(x)\sim g(x)$ as $x\to\infty$ if
$f(x)/g(x)\to 1$.
Bingham and Doney \cite{BD1,BD2} found asymptotics for $\Pr\{W>x\}$
in the case when $\xi$ is regularly varying with
non-integer index $\alpha<-1$ (for the case of integer $\alpha$
see De Meyer \cite{M}). In \cite{BB} one can find
similar to \eqref{HBB} results for the left tail of $W$ in the case, when
the minimum offspring size is at least $2$. Fleischmann and Wachtel
\cite{FW07,FW09} found exact (without logarithmic scaling) asymptotics for
$\Pr\{W_n\in(0,x)\}$ and $\Pr\{W\in(0,x)\}$ as $x\to0$. These two papers
give a complete description of the asymptotic behaviour of the left tail of $W$.
It is possible to adapt the method from \cite{FW09} to upper deviation
problems for processes with polynomial offspring generating functions. As a
result one gets exact asymptotics for $\Pr\{W>x\}$ as $x\to\infty$, see
Remark 3 in \cite{FW09}.

In all the papers mentioned above, the proofs were based on the fact that
$\varphi(u):=\E e^{-uW}$ satisfies the Poincare functional equation,
$\varphi(mu)=f(\varphi(u))$, where $f$ stands for the offspring generating
function. In the present paper we do not use that equation. Instead, we
apply probabilistic techniques for sums of independent
identically distributed variables and for Galton--Watson
processes with heavy tails which were developed in recent years.

We work with the following classes of distributions.

Distribution of a random variable $\xi$ is called {\it heavy-tailed}
if $\E e^{\lambda\xi}=\infty$ for every $\lambda>0$.

We say that a distribution $F$ on $\R$ is {\it dominated varying},
and write $F\in{\mathcal D}$, if
\begin{eqnarray}\label{class.D}
\sup_{x}\frac{\overline F(x/2)}{\overline F(x)}<\infty.
\end{eqnarray}

A distribution $F$ on $\R$ is called {\it intermediate regularly
varying} if
$$
\lim_{\varepsilon\downarrow0}\liminf_{x\to\infty}
\frac{\overline F(x(1+\varepsilon))}{\overline F(x)} = 1.
$$
Note that any regularly varying distribution is intermediate
regularly varying. Any intermediate regularly varying distribution
is dominated varying.

For any positive function $h(x)\to\infty$, we say that $F$ is
$h$-{\it insensitive} if
$\overline F(x+h(x))\sim \overline F(x)$ as $x\to\infty$.
A distribution $F$ is intermediate regularly varying
if and only if $F$ is $h$-insensitive for any positive
function $h$ such that $h(x)=o(x)$ as $x\to\infty$;
in other words, if $F$ is $o(x)$-insensitive
(see \cite[Theorem 2.47]{FKZ}).

We say that a distribution $F$ on $\R^+$ with mean $m$ is
{\it strong subexponential}, and write $F\in{\mathcal S}^*$, if
$$
\int_0^x \overline F(x-y)\overline F(y)dy \sim 2m\overline F(x)
\quad\mbox{ as }x\to\infty.
$$
Among strong subexponential distributions are intermediate
regularly varying, log-normal and Weibull with parameter $\beta<1$.
Any dominated varying distribution is in ${\mathcal S}^*$
if it is long-tailed, that is, constant-insensitive.

A distribution $F$ is called {\it rapidly varying} if,
for any $\varepsilon>0$,
$$
\overline F(x(1+\varepsilon))=o(\overline F(x))
\quad\mbox{ as }x\to\infty.
$$
Clearly this class includes Weibull distributions
$\overline F(x)=e^{-x^\beta}$ with parameter $\beta>0$.
The log-normal distribution is also rapidly varying.
This class does not include intermediate regularly varying
distributions.

\begin{theorem}\label{thm:irv}
Let $F$ be dominated varying distribution
such that, for some $\delta>0$ and $c<\infty$,
\begin{eqnarray}\label{cond:mat}
\overline F(xy) &\le& c\overline F(x)/y^{1+\delta}
\quad\mbox{ for all }x,\ y>1.
\end{eqnarray}
Then there exist constants $c_1>0$ and $c_2<\infty$ such that
\begin{eqnarray}\label{domin.b}
c_1\overline F(x)\le\Pr\{W_n>x\} \le c_2\overline F(x)
\quad\mbox{ for all }x,\ n.
\end{eqnarray}
If, in addition, $F$ is intermediate regularly varying distribution,
then, uniformly in $n$,
\begin{eqnarray}\label{irv.1}
\Pr\{W_n>x\} &\sim& \sum_{i=0}^{n-1} m^i \overline F(m^{i+1}x)
\quad\mbox{ as }x\to\infty.
\end{eqnarray}
In particular,
\begin{eqnarray}\label{irv.2}
\Pr\{W_n>x\} &\sim& \sum_{i=0}^\infty m^i \overline F(m^{i+1}x)
\quad\mbox{ as }x, n\to\infty,
\end{eqnarray}
and
\begin{eqnarray}\label{irv.3}
\Pr\{W>x\} &\sim& \sum_{i=0}^\infty m^i \overline F(m^{i+1}x)
\quad\mbox{ as }x\to\infty.
\end{eqnarray}
\end{theorem}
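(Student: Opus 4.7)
The guiding heuristic is the principle of a single big jump at one generation. Via the branching recursion
$$
W_n \stackrel{d}{=} \frac{1}{m}\sum_{j=1}^{Z_1} W_{n-1}^{(j)},
$$
where the $W_{n-1}^{(j)}$ are i.i.d.\ copies of $W_{n-1}$ independent of $Z_1$, the event $\{W_n>x\}$ typically arises when some individual in some generation $i\in\{0,\ldots,n-1\}$ produces about $m^{i+1}x$ offspring; by the law of large numbers the descendants of this anomalous ancestor then contribute $\approx m^{n-i-1}\cdot m^{i+1}x=m^n x$ to $Z_n$, forcing $W_n>x$. Since generation $i$ contains on average $m^i$ individuals, the cost of such an event is $\approx m^i\overline F(m^{i+1}x)$, and summing over $i$ produces the series in (\ref{irv.1}).

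I would prove (\ref{domin.b}) by induction on $n$, aiming for the stronger upper bound $\Pr\{W_n>x\}\le c\sum_{i=0}^{n-1}m^i\overline F(m^{i+1}x)$. Condition (\ref{cond:mat}) gives $m^i\overline F(m^{i+1}x)\le c\overline F(x)\,m^{-1-\delta-i\delta}$ for $x\ge 1$, so the series is uniformly bounded in $n$ by $c_2\overline F(x)$ and the inductive loop closes. The inductive step splits $\{W_n>x\}$ according to whether $Z_1>Cmx$ (direct cost $\overline F(Cmx)$, controlled via (\ref{cond:mat})) or, on $\{Z_1=k\le Cmx\}$, applies the standard heavy-tailed bound $\Pr\{\sum_{j=1}^k W_{n-1}^{(j)}>mx\}\le C'k\Pr\{W_{n-1}>mx/2\}$ together with the induction hypothesis. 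The lower bound is more elementary: the event $\{Z_1>mx(1+\varepsilon)\}$, intersected with the LLN event that the $Z_1$ subtrees have aggregate size within $\varepsilon$ of $m^{n-1}Z_1$, forces $W_n>x$; dominated variation converts $c\overline F(mx)$ into $c_1\overline F(x)$.

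To sharpen to (\ref{irv.1}) under IRV, I would repeat the one-big-jump lower-bound construction separately at each generation $i\in\{0,\ldots,n-1\}$ with a tolerance $h(x)=o(x)$; IRV absorbs the perturbation $\overline F(m^{i+1}x+h(x))\sim\overline F(m^{i+1}x)$, the events in distinct generations are asymptotically disjoint (two-jump probability $O(\overline F(x)^2)=o(\overline F(x))$), and each yields the contribution $(1+o(1))m^i\overline F(m^{i+1}x)$. For the matching upper bound I would iterate the branching recursion and apply the IRV version of the one-big-jump principle $\Pr\{\sum_{j=1}^k W_{n-1}^{(j)}>y\}\sim k\Pr\{W_{n-1}>y\}$ together with the inductive asymptotic at level $n-1$; the no-big-jump contribution is an $o$-term, while the multi-jump contribution is $O(\overline F(x)^2)$.

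The main obstacle is uniformity in $n$. The geometric decay $m^i\overline F(m^{i+1}x)\le c\overline F(x)\,m^{-i\delta}$ from (\ref{cond:mat}) localizes the series on the first few indices, so for every $\varepsilon>0$ there exists $N$ with $\sum_{i\ge N}m^i\overline F(m^{i+1}x)\le\varepsilon\overline F(x)$ uniformly in $n$; the a priori bound $\Pr\{W_n>x\}=O(\overline F(x))$ from (\ref{domin.b}) then lets me upgrade the fixed-$n$ asymptotic to uniformity in $n$, giving (\ref{irv.2}). Finally, passing to the limit $n\to\infty$ with $W_n\to W$ a.s.\ and this uniformity delivers (\ref{irv.3}).
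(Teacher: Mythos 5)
Your heuristic---a single oversized litter at some generation $k$ costs roughly $m^k\overline F(m^{k+1}x)$ and this explains the whole tail---is the right one, and your lower-bound construction (disjoint big-jump events at each generation, cleaned up by a law-of-large-numbers event on the descendants) is essentially the paper's Lemma~\ref{l:lower}. The gap is in the upper bound. Decomposing through the first generation, $W_n=\frac{1}{m}\sum_{j=1}^{Z_1}W_{n-1}^{(j)}$, and invoking $\Pr\{\sum_{j\le k}W_{n-1}^{(j)}>mx\}\le C'k\,\Pr\{W_{n-1}>mx/2\}$ forces you to apply a subexponential-type estimate to summands whose distribution depends on $n$ and is precisely what the induction is trying to control, so a uniform $C'$ is not free. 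Worse, even granting it, summing over $k$ against $\Pr\{Z_1=k\}$ produces the factor $C'\E Z_1=C'm>1$ in front of the inductive term, and the fixed halving to $mx/2$ costs a further dominated-variation constant; the hypothesis $\Pr\{W_n>x\}\le c\sum_{i<n}m^i\overline F(m^{i+1}x)$ regenerates with constant at least $C'm\,c$, which blows up geometrically in $n$. A fixed multiplicative loss per step cannot give uniformity.

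The paper sidesteps both issues by decomposing through the \emph{last} generation, $Z_n=\sum_{i=1}^{Z_{n-1}}\xi_i$, and splitting on $\{Z_{n-1}\le z\}$. The summands are then always i.i.d.\ with the \emph{fixed} offspring law $F$, so the zero-mean large-deviation bound of Proposition~\ref{thm:upper.bound.for.sum.R} applies once and for all, with no circularity. The complementary event $\{Z_{n-1}>z\}$ is not estimated at all---it is carried forward as $\Pr\{W_{n-1}>x-x_n\}$ with coefficient exactly one, and the only loss is additive and telescoping: with $x_k=x/k^2$ one gets $\Pr\{W_n>x\}\le\Pr\{W_{n-1}>x-x_n\}+cm^{n-1}\overline F(m^nx_n)$, where $\sum_{k>N}x_k\le\varepsilon x$ and condition~\eqref{cond:mat} makes the additive errors summable to $\varepsilon\overline F(mx)$. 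Iterating yields $\Pr\{W_n>x\}\le(1+\varepsilon)\Pr\{W_N>(1-\varepsilon)x\}$ for a fixed $N$ and all $n>N$ (Lemma~\ref{l:upper}), and this inequality is exactly what your uniformity step is missing: the a priori bound $\Pr\{W_n>x\}=O(\overline F(x))$ only gives $\Pr\{W_n>x\}\le C\sum_{i<n}m^i\overline F(m^{i+1}x)$ with some constant $C>1$, not $(1+o(1))$ of the sum, and the concentration of the series on $i<N$ says nothing by itself about $\Pr\{W_n>x\}-\Pr\{W_N>x\}$. With Lemma~\ref{l:upper} in hand, the fixed-$n$ asymptotic~\eqref{thm:DFK} from \cite{DFK} plus intermediate regular variation (to absorb the $(1-\varepsilon)$ shift in the argument) give~\eqref{irv.1}--\eqref{irv.3}.
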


As follows from the proof of Lemma \ref{l:lower},
$$
\Pr\Bigl\{\max_{i\le Z_k}\xi^{(k)}_i\ge m^{k+1}x\Bigr\}
\sim m^k\overline{F}(m^{k+1}x)
\quad\mbox{ as }x\to\infty
$$
and the summand $m^k\overline{F}(m^{k+1}x)$ in \eqref{irv.1}--\eqref{irv.3}
describes the probability of the existence of a very productive
particle in the $k$-th generation.
We can informally restate \eqref{irv.1}--\eqref{irv.3} as follows
$$
\{W_n>x\}\approx\bigcup_{k=0}^{n-1}
\Bigl\{\max_{i\le Z_k}\xi^{(k)}_i\ge m^{k+1}x\Bigr\}
$$
and
$$
\{W>x\}\approx\bigcup_{k=0}^\infty
\Bigl\{\max_{i\le Z_k}\xi^{(k)}_i\ge m^{k+1}x\Bigr\}.
$$
Moreover, if $\overline{F}(x)$ is regularly varying with index $\alpha<-1$
then, uniformly in $n$,
$$
\Pr\Bigl\{\max_{i\le Z_k}\xi^{(k)}_i\ge m^{k+1}x\Big|W_n>x\Bigr\}\to
\frac{m^{-(\alpha-1)k}}{\sum_{j=0}^{n-1}m^{-(\alpha-1)j}}
\quad\mbox{ as }x\to\infty.
$$
In the limit $n\to\infty$ we get the geometric distribution with
the parameter $m^{-(\alpha-1)}$. Therefore, atypically big values of
the limit $W$ are caused by a very productive particle which lives
in one of the initial generations, and the number of this generation
is random with the geometric distribution mentioned above.

If we assume the second moment of $\xi$ finite then we may relax
the regularity condition on $F$, namely we may consider
distributions which are not necessarily intermediate regularly
varying as was assumed in Theorem \ref{thm:irv}.

\begin{theorem}\label{thm:sq.root}
Let $F$ be dominated varying distribution and
the condition \eqref{cond:mat} hold. If $\E\xi^2<\infty$
and $F$ is $x^\gamma$-insensitive distribution for some
$\gamma>1/2$, then the asymptotics \eqref{irv.1}, \eqref{irv.2}
and \eqref{irv.3} hold.
\end{theorem}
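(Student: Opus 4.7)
The plan is to mirror the two-sided ``single big jump'' argument used for Theorem \ref{thm:irv}, but to replace the $o(x)$-slack available under intermediate regular variation by a slack of order $x^\gamma$ absorbed through the $x^\gamma$-insensitivity assumption. The choice $\gamma>1/2$ is exactly tuned to the central-limit fluctuation of a productive particle's subtree: if a particle at generation $k$ has $N=m^{k+1}x$ offspring, its subtree at generation $n$ has mean size $Nm^{n-k-1}$ with Gaussian fluctuation of order $\sqrt{N}\,m^{n-k-1}$ (using $\E\xi^2<\infty$), contributing $x\pm O(\sqrt{x})$ to $W_n$; the slack $x^\gamma$ with $\gamma>1/2$ is precisely what covers this $O(\sqrt{x})$.

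For the lower bound I would consider, for each $k<n$, the event $A_k=\{\exists\,i\le Z_k:\ \xi_i^{(k)}\ge m^{k+1}(x+x^\gamma)\}$. On $A_k$ the productive particle's subtree alone contributes more than $(x+x^\gamma)\widetilde{W}^*_{n-k-1}$ to $W_n$, and since $\E\xi^2<\infty$ forces the fluctuation of the subtree contribution about its mean $x$ to be $O(\sqrt{x})=o(x^\gamma)$, we conclude $W_n>x$ on $A_k$ up to an event of probability $o(\Pr(A_k))$. The events $A_k$ are asymptotically disjoint, each has probability $\sim m^k\overline F(m^{k+1}(x+x^\gamma))$, and $x^\gamma$-insensitivity replaces this by $m^k\overline F(m^{k+1}x)$; summing over $k$ yields the required lower bound.

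For the upper bound I would set $y_k=m^{k+1}(x-x^\gamma)$ and decompose $\{W_n>x\}$ according to whether some offspring exceeds its truncation level. The probability of at least one such big jump is bounded above, via Markov and $\E Z_k=m^k$, by $\sum_{k<n}m^k\overline F(y_k)\sim\sum_{k<n}m^k\overline F(m^{k+1}x)$ by $x^\gamma$-insensitivity. On the complementary no-big-jump event, the truncated process $W_n^c$ has $\E W_n^c=1+o(1)$ and aggregated variance $O(1)$ (using $\E\xi^2<\infty$), and it remains to show $\Pr(W_n^c>x)$ is smaller than the target. Since $F$ is dominated varying, $\overline F(x)$ decays at most polynomially, so any super-polynomial estimate on $\Pr(W_n^c>x)$ will do.

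The hard part will be this last step: establishing the required concentration for the truncated process uniformly in $n$. The per-particle truncated contribution $y_k m^{-(k+1)}=x-x^\gamma$ to $W_n$ is still of order $x$, so a one-shot Bennett inequality gives only a constant bound. I would handle this by a two-level truncation, isolating the ``moderate'' offspring in the range $[m^{k+1}x^\beta,y_k]$ for some $\beta\in(1/2,1)$ --- whose total expected number across all generations is at most $\sum_k m^k\overline F(m^{k+1}x^\beta)$, which is controlled by Markov --- and applying a Bennett-type estimate to the truly small remaining contributions of per-particle size $O(x^\beta)$, yielding a bound of the form $\exp(-cx^{2-2\beta})$ that beats any polynomial. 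Uniformity in $n$ in \eqref{irv.2}--\eqref{irv.3} then follows from the geometric decay of the tail $\sum_{i\ge N}m^i\overline F(m^{i+1}x)$ guaranteed by \eqref{cond:mat}, exactly as in the proof of Theorem \ref{thm:irv}.
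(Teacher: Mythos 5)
Your lower bound is essentially the paper's Lemma~\ref{l:lower.2}: take the disjoint events where a single particle in some generation $k$ produces $\ge m^{k+1}x$ offspring plus a CLT-sized cushion, then use Chebyshev with the uniformly bounded $\mathrm{Var}\,W_n$ to show the ensuing subtree overshoots $x$ with probability $\to1$. Your cushion $x^\gamma$ and the paper's cushion $A\sqrt{m^{k+1}x}$ (with $A\to\infty$) both work, since $x^\gamma$-insensitivity with $\gamma>1/2$ implies $\sqrt{x}$-insensitivity. This part is fine.

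The upper bound is where you diverge from the paper, and your sketch has a genuine gap. You want to truncate every $\xi^{(k)}_i$ at level $y_k=m^{k+1}(x-x^\gamma)$, charge the probability of a big jump to $\sum_k m^k\overline F(y_k)$, and then argue that the truncated process $W_n^c$ concentrates super-polynomially. You correctly observe that a one-shot Bennett inequality does not give this, and you propose a second truncation at level $m^{k+1}x^\beta$. But the intermediate ``moderate'' offspring, with sizes in $[m^{k+1}x^\beta,y_k]$, kill the argument: the probability that at least one such offspring exists is of order $\sum_k m^k\overline F(m^{k+1}x^\beta)\asymp\overline F(mx^\beta)$ (by \eqref{cond:mat}), and this is \emph{larger} than the target $\overline F(mx)$, not smaller. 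So you cannot simply discard the event $E_m$ of having a moderate offspring; you would have to show that even on $E_m$, the event $\{W_n^c>x\}$ is rare, and that requires controlling how the subtree descending from a moderate particle combines with the small-jump bulk. Moreover, each moderate particle feeds a whole subtree into later generations, so the bound $W_n^{\mathrm{moderate}}\le N_m(x-x^\gamma)$ is not even a pointwise inequality without further work. Nothing in your sketch addresses this, and it is not a cosmetic issue but the heart of the matter.

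The paper avoids this entirely by taking a different decomposition. In Lemma~\ref{l:upper.sq} it writes $\Pr\{W_{n+1}>x\}$ by conditioning on the value of $Z_n=k$, splits into four ranges of $k$, and applies the sum-tail bounds of Section~\ref{sec:sums}: for small $k\le m^n x^\gamma$ Proposition~\ref{S.star} (the $\mathcal S^*$ asymptotic $\Pr\{T_k>\cdot\}\le(1+o(1))k\overline F(\cdot)$ for negatively drifted sums) yields the exact leading term $m^n\overline F(m^{n+1}x)$; for intermediate $k$ Proposition~\ref{thm:upper.bound.for.sum.R} gives the rough bound $ck\overline F(\cdot)$ valid when $k\le(\text{target})^{1+\delta'}$, and the contributions there are $o(\overline F(mx))$; and for $k>m^n(x-x^\gamma)$ the induction hypothesis plus $x^\gamma$-insensitivity finishes. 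The finite-to-infinite-$n$ reduction is not the one used for Theorem~\ref{thm:irv} (Lemma~\ref{l:upper}) but a sharper variant, Lemma~\ref{l:upper.sq.N}, which produces $\Pr\{W_n>x\}\le(1+\varepsilon)\Pr\{W_N>x-x^\gamma\}$ rather than $\Pr\{W_N>(1-\varepsilon)x\}$ --- precisely because $x^\gamma$-insensitivity is weaker than $o(x)$-insensitivity, so the iteration slack must be $x^\gamma$, not $\varepsilon x$. You hint at being aware of this in your last paragraph, but saying it goes ``exactly as in the proof of Theorem~\ref{thm:irv}'' is not accurate. In summary: your lower bound is correct and matches the paper; your upper bound takes a ``truncate and concentrate'' route that does not close in the heavy-tailed regime and should be replaced by conditioning on $Z_n$ and invoking Propositions~\ref{S.star} and~\ref{thm:upper.bound.for.sum.R}.
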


We next turn to the case of Weibull-type offspring distributions.

\begin{theorem}\label{thm:asy.uni.W}
Let $\overline F(x)=e^{-R(x)}$ where $R(x)$ is regularly
varying with index $\beta\in(0,1)$.
Additionally assume that $F\in{\mathcal S}^*$.
Then, for every $\varepsilon>0$,
$$
(1+o(1))\overline F((m+\varepsilon)x)
\le \Pr\{W_n>x\} \le (1+o(1))\overline F((m-\varepsilon)x)
$$
as $x\to\infty$ uniformly in $n$.

If $\beta<\frac{3-\sqrt 5}{2}\approx 0.382$ then
$\Pr\{W_n>x\} \sim \overline F(mx)$
as $x\to\infty$ uniformly in $n$ and
$\Pr\{W>x\} \sim \overline F(mx)$ as $x\to\infty$.

If $\beta<1/2$ and, in addition, for some $c_1<\infty$,
\begin{eqnarray}\label{cond:R.der}
R(k)-R(k-1) &\le& c_1\frac{R(k)}{k},\quad k\ge 1,
\end{eqnarray}
then $\Pr\{W_n>x\} \sim \Pr\{W>x\}\sim\overline F(mx)$
as $x\to\infty$ uniformly in $n$.
\end{theorem}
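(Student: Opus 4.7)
Everything rests on the branching recursion
\begin{equation*}
W_n \stackrel{d}{=} \frac{1}{m}\sum_{j=1}^{\xi}W_{n-1}^{(j)},
\end{equation*}
where $\xi$ has distribution $F$ and the $W_{n-1}^{(j)}$ are i.i.d.\ copies of $W_{n-1}$ independent of $\xi$. In the Weibull regime, $\xi$ has tail $\overline F$ while each $W_{n-1}^{(j)}$ has tail $\sim\overline F(m\,\cdot)$, which is exponentially smaller since $R(mx)-R(x)\to\infty$. The heuristic is therefore that $\{W_n>x\}$ occurs essentially through $\{\xi>mx\}$ followed by $\bar W_{n-1}:=\xi^{-1}\sum_{j=1}^\xi W_{n-1}^{(j)}\approx 1$ by the law of large numbers, giving the target $\Pr\{W_n>x\}\sim\overline F(mx)$.

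For the first (rough) assertion, the lower bound $\ge(1+o(1))\overline F((m+\varepsilon)x)$ follows by conditioning on $\{\xi>(m+\varepsilon)x\}$ and applying the LLN to $\bar W_{n-1}$, whose variance is finite because Weibull-type tails with $F\in{\mathcal S}^*$ imply $\E\xi^p<\infty$ for all $p$. For the upper bound I would decompose according to whether some offspring satisfies $\xi_i^{(k)}\ge(m-\varepsilon)m^kx$ in generation $k<n$: the union-bound cost is $\sum_{k\ge 0}m^k\overline F((m-\varepsilon)m^kx)\sim\overline F((m-\varepsilon)x)$, since only the $k=0$ term survives under Weibull decay; on the complementary truncated event a Bernstein-type concentration for the martingale $W_n-1$ with bounded-from-above increments yields probability at most $\exp(-cx)$, negligible against $\exp(-R((m-\varepsilon)x))$ for $\beta<1$.

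The sharp asymptotic is obtained by induction on $n$, maintaining $\Pr\{W_n>x\}=(1+\eta_n(x))\overline F(mx)$ with $\eta_n(x)\to 0$ uniformly in $n$. Split $\{\sum_{j=1}^\xi W_{n-1}^{(j)}>mx\}$ according to whether $\xi>mx$. The part $\{\xi>mx\}$ gives $\overline F(mx)$ minus the correction $\Pr\{\xi>mx,\bar W_{n-1}<1\}$, which, because the conditional fluctuation of $W_n$ around $\xi/m$ is of order $\sqrt x$, is bounded by $\overline F(mx)-\overline F(mx+c\sqrt x)$ plus a Chebyshev tail; this is $o(\overline F(mx))$ as soon as $\overline F$ is $\sqrt x$-insensitive on the scale $mx$, i.e., $\sqrt x=o(x^{1-\beta})$, equivalently $\beta<1/2$. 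On $\{\xi\le mx\}$, a strong-subexponential sum asymptotic $\Pr\{\sum_{j=1}^k W_{n-1}^{(j)}>mx\mid\xi=k\}\sim k\,\overline G_{n-1}(mx-k)$ combined with the inductive $\overline G_{n-1}(y)\sim\overline F(my)$ gives, after optimizing over $k$, a contribution exponentially smaller than $\overline F(mx)$.

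The main difficulty is to propagate the sharp asymptotic through the induction with error uniform in $n$: since the law $G_{n-1}$ of $W_{n-1}$ is only approximately a rescaling of $F$, one needs a quantitative strong-subexponential sum asymptotic robust to $\overline G_{n-1}(y)=(1+o(1))\overline F(my)$, and the iterated propagation of this error together with the $O(\sqrt x)$ fluctuation of $\bar W_{n-1}$ tightens the natural barrier $\beta<1/2$ to the sharper $\sqrt\beta+\beta<1$, equivalent to $\beta<(3-\sqrt 5)/2$. Under the extra regularity \eqref{cond:R.der}, the local increments $R(x+h)-R(x)$ are controlled uniformly on the scale $h=O(\sqrt x)$, and this allows one to close the induction all the way up to the natural threshold $\beta<1/2$. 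Finally, $\Pr\{W>x\}\sim\overline F(mx)$ follows from the uniform-in-$n$ estimate by letting $n\to\infty$, using $W_n\to W$ a.s.\ and that the distribution of $W$ has no atom at $x>0$.
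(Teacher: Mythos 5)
Your plan has the right overall shape — use the branching recursion, separate out a dominant one-big-jump event $\{\xi>mx\}$, control the remainder by sums with $W_{n-1}$-distributed summands — and you correctly locate the threshold $\beta<\tfrac{3-\sqrt5}{2}$ as the solution of $\sqrt\beta+\beta<1$. But there are two genuine gaps.

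First, the rough upper bound does not follow from ``Bernstein on the truncated martingale.'' On the event that no offspring in generation $k$ exceeds $(m-\varepsilon)m^kx$, the martingale increment $W_{k+1}-W_k=m^{-(k+1)}\sum_{i\le Z_k}(\xi^{(k)}_i-m)$ still contains summands that, after the normalization by $m^{k+1}$, are bounded only by $(m-\varepsilon)x/m$, i.e.\ by a constant multiple of the deviation $x$ itself. Bernstein with bound $M\asymp x$ gives $\exp(-cx^2/(\sigma^2+Mx))=\exp(-O(1))$, which is useless, and the sub-$x$ tail of the truncated $\xi$'s is still Weibull, not light. The paper instead proves a Cram\'er-type estimate for sums with Weibull increments (Proposition~\ref{thm:upper.bound.for.sum.W}: choose $\lambda=(1+\varepsilon)R(y)/x$, not a fixed multiple of $1/M$) and then iterates it across generations with a summable sequence of slacks $x_k=x/k^2$ (Lemma~\ref{l:upper.W}), so that $\Pr\{W_n>x\}\le\Pr\{W_N>(1-\varepsilon)x\}+c\sum_{k>N}m^{k-1}\overline F((m-\varepsilon/2)^k x/k^2)$. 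That geometric tail is exactly what the union bound over generations needs to produce, and a naive Bernstein bound does not produce it.

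Second, you explicitly identify the uniformity in $n$ of the sharp asymptotics as ``the main difficulty'' and then assert that iterated error propagation ``tightens the natural barrier,'' but you don't give a mechanism that closes the induction uniformly. The paper does not run an infinite induction: it first reduces to a finite horizon $N$ via Lemma~\ref{l:upper.W}, then proves the fixed-$n$ asymptotics $\Pr\{W_n>x\}\sim\overline F(mx)$ (Lemma~\ref{l:upper.2}) by finitely many induction steps, and finally uses Corollary~\ref{cor:upper.bound.for.sum.W} to make the last step uniform over $n>N$. The threshold $\beta<\tfrac{3-\sqrt5}{2}$ then has a concrete origin there: one needs a single window exponent $\gamma$ that is simultaneously $>\tfrac1{2-\beta}$ (so that $kR(\cdot)/(\text{gap})^2\to0$, making the Cram\'er estimate applicable) and $<1-\beta$ (so that $F$ is $x^\gamma$-insensitive). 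That interval is nonempty iff $\beta^2-3\beta+1>0$, i.e.\ $\beta<\tfrac{3-\sqrt5}{2}$; when it is empty one must split the range of $Z_n$ further and use the extra local-increment hypothesis~\eqref{cond:R.der} to kill the middle piece $P_{22}$. Your heuristic ``$O(\sqrt x)$ fluctuation plus iterated error'' does not reproduce this and, as written, would suggest only $\beta<1/2$, not the sharper threshold. So the plan is not wrong in spirit, but as it stands it is missing the reduction-to-finite-horizon step and a correct replacement for Bernstein.
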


Let us make a remark on Weibull-type offspring distributions
which are not $\sqrt x$-insensitive. If
$\Pr\{\xi>x\}\sim e^{-x^\beta}$ with some $\beta\in(1/2,1)$, then
\begin{equation}\label{Rem1}
\Pr\{W_n>x\}\ge \exp\Bigl\{-(mx)^\beta
+\frac{\beta^2\sigma_n^2}{2}(mx)^{2\beta-1}(1+o(1))\Bigr\},
\quad n\ge 2,
\end{equation}
and
\begin{equation}\label{Rem2}
\Pr\{W>x\}\ge \exp\Bigl\{-(mx)^\beta
+\frac{\beta^2\sigma^2}{2}(mx)^{2\beta-1}(1+o(1))\Bigr\}.
\end{equation}
Here $\sigma_n^2:=\E(W_n-1)^2$ and $\sigma^2:=\E(W-1)^2$.
These bounds imply that, in contrast to the case $\beta<1/2$,
$\Pr\{W_n>x\}\gg\overline F(mx)$ for all $n\ge 2$.
At the end of Section \ref{sec:lower.bounds}
we give arguments for (\ref{Rem2}).

In Theorem \ref{thm:asy.uni.W} we have, uniformly in $n$,
$$
\{W_n>x\}\approx \{\xi^{(0)}_1>mx\}.
$$
Thus, large values of all $W_n$ are caused by a correspondingly large
first generation.

The importance of initial generations for deviation probabilities
can be explained by the multiplicative structure of supercritical
Galton-Watson processes. As a consequence of this fact, it is
`cheaper' to have some special type of behaviour at the very
beginning of the process. In Theorems \ref{thm:irv} and
\ref{thm:asy.uni.W} we see a quite strong localisation: only finite
number of generations is important. There are some examples in the
literature where a weaker form of the localisation occurs. In
the case of lower deviations which were studied in \cite{FW07,FW09},
the optimal strategy looks as follows: In order to have
$\{Z_n=k_n\}$ with some $k_n=o(m^n)$ every particle in first $a_n$
generations should have exactly $\mu:=\min\{k:\Pr\{\xi=k\}>0\}$
children. (Here we assume, for simplicity, that $\xi\ge 1$.) In all
later generations we let $Z_k$ grow without any restriction, i.e.,
geometrically with the rate $m$. Since we want to get $k_n$ particles
in the $n$-th generation, $a_n$ should satisfy $\mu^a_nm^{n-a_n}\approx k_n$.
Recalling that $k_n=o(m^n)$, we see that the number of generations
with non-typical behaviour tends to infinity. A similar strategy is
behind asymptotics for $\Pr\{W<\varepsilon\}$ as
$\varepsilon\to0$ and behind asymptotics for upper deviations of
processes with polynomial generating functions. This localisation
effect for Galton-Watson processes with vanishing limit,
that is, $Z_n$ conditioned on $\{W<\varepsilon\}$ with $\varepsilon\to0$,
was recently studied by Berestycki, Gantert, M\"orters and Sidorova \cite{BGMS}.
They showed that the genealogical tree coinsides up to a certain generation
with the regular $\mu$-ary tree.

It turns out that this type of optimal strategies is not universal for
supercritical Galton-Watson processes. The next result shows that if
the offspring distribution has only the first power moment,
then large values of $W_n$ and $W$ can be produced by the middle
part of the tree.

\begin{theorem}\label{T10}
Assume that $\E\xi\log\xi<\infty$ and $\overline F(x)$
is regularly varying with index $-1$. Then, uniformly in $n\ge 1$,
\begin{equation}\label{T10.1}
\Pr\{W_n>x\}\sim\sum_{i=0}^{n-1} m^i \overline F(m^{i+1}x)
\sim\frac{1}{m\log m}x^{-1}\int_x^{m^nx}\overline F(u)du
\quad\mbox{as }x\to\infty.
\end{equation}
For the limit $W$ we have
\begin{equation}\label{T10.11}
\Pr\{W>x\}\sim\sum_{i=0}^\infty m^i \overline F(m^{i+1}x)
\sim\frac{1}{m\log m}x^{-1}\int_x^\infty\overline F(u)du
\quad\mbox{as }x\to\infty.
\end{equation}
\end{theorem}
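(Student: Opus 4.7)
The plan is to establish matching upper and lower bounds on $\Pr\{W_n>x\}$ uniform in $n$, and then to pass to $W$ using $W_n\to W$ almost surely. The asymptotic equivalence between the sum and the integral in \eqref{T10.1} is purely analytic: writing $\overline F(u)=L(u)/u$ with $L$ slowly varying, Karamata's theorem gives $\int_{m^ix}^{m^{i+1}x}L(u)/u\,du\sim L(m^ix)\log m$, while $m^i\overline F(m^{i+1}x)\sim L(m^ix)/(mx)$; summing in $i$ identifies the two sides, and Potter's bound supplies uniformity in $n$. For the lower bound I would carry over the ``very productive particle'' construction used in the proof of Theorem \ref{thm:irv}: for each $k<n$, Lemma \ref{l:lower} combined with $\E Z_k=m^k$ and the regular variation of $\overline F$ yields $\Pr\{\max_{i\le Z_k}\xi_i^{(k)}\ge m^{k+1}x\}\sim m^k\overline F(m^{k+1}x)$, and this lemma does not rely on condition \eqref{cond:mat}. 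Conditionally on such a big offspring in generation $k$, the subtree rooted at the oversized particle contributes $\sim m^{n-k-1}$ to $Z_n$ by the a.s.\ limit $W_{n-k-1}\to W>0$, so $W_n>x(1-o(1))$ with conditional probability tending to one. Restricting to the first generation in which a large offspring appears makes the events for distinct $k$ disjoint and gives $\Pr\{W_n>x\}\ge(1-o(1))\sum_{k=0}^{n-1}m^k\overline F(m^{k+1}x)$.

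The upper bound is the main obstacle, since condition \eqref{cond:mat} with $\delta>0$, which underpins Theorems \ref{thm:irv}--\ref{thm:sq.root}, just fails at index $-1$. I plan to induct on $n$ via the branching decomposition $W_n=m^{-1}\sum_{j=1}^{Z_1}\widetilde W_{n-1}^{(j)}$, with $\widetilde W_{n-1}^{(j)}$ iid copies of $W_{n-1}$ of mean $1$. The event $\{W_n>x\}=\{\sum_{j\le Z_1}\widetilde W_{n-1}^{(j)}>mx\}$ is handled by the one-big-jump bound for random heavy-tailed sums with $Z_1$ summands; modulo lower-order error the dominant contribution is
$$\Pr\{Z_1>mx\}+m\,\Pr\{W_{n-1}>mx\}=\overline F(mx)+m\,\Pr\{W_{n-1}>mx\},$$
and substituting the inductive hypothesis into the second term and re-indexing $j=i+1$ produces the desired sum $\sum_{j=0}^{n-1}m^j\overline F(m^{j+1}x)$. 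The delicate step, and the real technical work, is the uniform justification of this one-big-jump estimate at the boundary index $-1$: intermediate values of $Z_1$ comparable to $mx$ generate factors $\overline F(mx-Z_1)/\overline F(mx)$ that blow up like $(1-Z_1/(mx))^{-1}$, which must be controlled by summability against $\Pr\{Z_1=k\}$, itself borderline for index $-1$. The hypothesis $\E\xi\log\xi<\infty$, strictly stronger than $\E\xi<\infty$, together with uniform Potter bounds on $L$, is precisely what one needs to absorb these factors while keeping the estimate uniform in $n$.

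Finally, $W_n\to W$ almost surely, so by Fatou $\Pr\{W>x\}\le\liminf_n\Pr\{W_n>x\}$; combined with the uniform two-sided bound above and the convergence of $\sum_{i=0}^\infty m^i\overline F(m^{i+1}x)<\infty$ for each $x>0$ (finite since $\E\xi<\infty$ is implied by $\E\xi\log\xi<\infty$), this yields \eqref{T10.11}. The analytic identity and the lower bound are essentially direct; the entire technical weight of the theorem sits in making the one-big-jump upper bound hold uniformly in $n$ at the critical tail index.
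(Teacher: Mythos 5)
Your lower bound and the analytic equivalence between the sum and the integral are correct and match the paper's argument. Your treatment of $\Pr\{W>x\}$ via uniformity in $n$ plus $W_n\to W$ a.s.\ is also fine. The problem is the upper bound, where you propose to induct on $n$ through the branching decomposition $W_n=m^{-1}\sum_{j\le Z_1}\widetilde W_{n-1}^{(j)}$ and apply a one-big-jump estimate at each step. This is precisely the approach the paper explicitly rules out: at the start of Section~\ref{sec:first.moment} it says the method behind Lemma~\ref{l:upper} (and likewise behind the finite-horizon induction in Lemma~\ref{l:upper.sq}) ``doesn't work here because it essentially requires the condition \eqref{cond:mat},'' i.e.\ $\overline F(xy)\le c\overline F(x)/y^{1+\delta}$ with some $\delta>0$. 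At index exactly $-1$, that margin $\delta$ is what fails, and with it the summability in $k$ of the error terms $m^{k-1}\overline F(m^kx_k)$ that control both the reduction to a finite horizon and the intermediate ranges $Z_1\approx mx$. Your proposal names this as ``the real technical work'' and then asserts that $\E\xi\log\xi<\infty$ together with Potter bounds ``is precisely what one needs,'' but this is exactly the step that needs a proof; nothing in the proposal shows how the blow-up of $\overline F(mx-k)/\overline F(mx)$ near $k\approx mx$ gets absorbed, nor how the per-step $(1+o(1))$ multipliers stay bounded as $n\to\infty$ so that the induction remains uniform.

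The paper's actual proof of \eqref{T10.upper} does not use a one-big-jump bound at all. It truncates via the events $A_k(x)=\{\xi_i^{(k)}>m^{k+1}x(1-\varepsilon)\mbox{ for some }i\le Z_k\}$, union-bounds $\Pr\{\bigcup_k A_k(x)\}\le\sum_k m^k\overline F(m^{k+1}x(1-\varepsilon))$, and controls the remaining event $\{W_n>x\}\cap\bigcap_k\overline{A_k(x)}$ by a Chebyshev inequality with the exponential test function $e^{\lambda Z_n}$, propagated backwards through the tree via the recursion $\lambda_{n,k-1}=\log\E\{e^{\lambda_{n,k}\xi};\xi\le m^nx(1-\varepsilon)\}$. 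The delicate choice of the initial $\lambda_{nn}$ in terms of $p_n(x)=\sum_{k=0}^{n-1}m^k\overline F(m^{k+1}x)$ is what makes the final bound $o(p_n(x))$ uniformly in $n$. This analytic method, adapted from Nagaev--Vakhtel, replaces the combinatorial one-big-jump decomposition entirely and is the genuine content of the theorem; your proposal does not reach it, so there is a real gap in the upper bound.
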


Relation \eqref{T10.11} is a refinement of Theorem 1.4 in \cite{BD1}
where the following was proved: If $\E\{Z_1;Z_1>x\}\sim L(x)$ for
some slowly varying function $L$ satisfying
$\int_1^\infty\frac{L(x)}{x}dx<\infty$, then
$$
\E\{W;W>x\}\sim \frac{1}{m\log m} \int_x^\infty\frac{L(y)}{y}dy.
$$

Noting that $\overline F(x)=o\left(x^{-1}\int_x^\infty\overline F(u)du\right)$,
we conclude from Theorem \ref{T10} that, for every $N\geq1$,
$$
\sum_{i=0}^{N} m^i \overline F(m^{i+1}x)=o\left(\Pr(W>x)\right)
\quad\mbox{as }x\to\infty.
$$
This means that 'big jumps' in any fixed number of generations do
not affect large values of $W$. Furthermore, the main contribution to
$\sum_{i=0}^\infty m^i \overline F(m^{i+1}x)$ (and therefore to $\Pr\{W>x\}$)
comes from indices $i$ such that the ratio
$\frac{\int_{m^ix}^\infty\overline F(u)du}{\int_x^\infty\overline F(u)du}$
is bounded away from $0$ and $1$. For finite values of $n$ we have three different
regimes depending on the relation between $n$ and $x$.
We illustrate them by the following example.

\begin{example}
Assume that $\overline F(x)\sim x^{-1}\log^{-p-1}x$ with some $p>1$. Then
$$
L(x):=\int_x^\infty\overline{F}(y)dy\sim\frac{1}{p}\log^{-p}x
\quad\mbox{as }x\to\infty.
$$
Therefore,
\begin{equation}\label{Ex1}
\Pr\{W>x\}\sim\frac{1}{m\log m}x^{-1}L(x)\sim\frac{1}{pm\log m}x^{-1}\log^{-p}x.
\end{equation}

Consider now finite values of $n$.

First, if $n$ and $x$ are such that $\frac{n}{\log x}\to\infty$,
then, according to \eqref{T10.1},
\begin{eqnarray*}
\Pr\{W_n>x\} &\sim& \frac{1}{m\log m}x^{-1}\left(L(x)-L(m^nx)\right)\\
&\sim& \frac{1}{m\log m}x^{-1}L(x)\sim\frac{1}{pm\log m}x^{-1}\log^{-p}x.
\end{eqnarray*}
Comparing this with \eqref{Ex1}, we see that asyptotics of $\Pr\{W_n>x\}$
and $\Pr\{W>x\}$ are equal in this case.

Second, if $n$ and $x$ are such that $\frac{n}{\log x}\to t\in(0,\infty)$,
then
$$
L(m^nx)\sim\frac{1}{p}\left(\log x+n\log m\right)^{-p}
\sim \frac{1}{p}\log^{-p}x\left(1+t\log m\right)^{-p}.
$$
Consequently,
$$
\Pr\{W_n>x\}\sim \frac{1}{pm\log m}x^{-1}\log^{-p}x
\left(1-\left(1+t\log m\right)^{-p}\right).
$$
Here we see that $\Pr\{W_n>x\}$ and $\Pr\{W>x\}$ are still
of the same order, but the constants are different.

Third, if $\frac{n}{\log x}\to0$ then, noting
that $\log y\sim \log x$ uniformly in $y\in[x,m^nx]$, we have
\begin{eqnarray*}
\Pr\{W_n>x\} &\sim& \frac{1}{m\log m}x^{-1}\int_x^{m^nx}\frac{dy}{y\log^{p+1}y}\\
&\sim& \frac{1}{m\log m}\frac{1}{x\log^{p+1}x}\int_x^{m^nx}\frac{dy}{y}
\sim n\overline{F}(mx).
\end{eqnarray*}
Therefore, $\Pr\{W_n>x\}$ is much smaller than $\Pr\{W>x\}$ for these values
of $n$.
\end{example}

The problem of describing tail asymptotics for supercritical
Galton--Watson process is closely related to the problem
of tail behaviour for randomly stopped sum $S_\tau$ where
the random number $\tau$ of summands has the same distribution
as the summands $\xi$'s have.
For random sums, the only case well studied is the case where
the distribution tail of $\tau$ is much lighter than that
of $\xi$, see \cite{DFK}; in this case the typical answer
is $\Pr\{S_\tau>x\}\sim\E\tau\Pr\{\xi>x\}$ as $x\to\infty$.
The present study may be considered
as a step towards general problem for randomly stopped sums
where the tails of the stopping time $\tau$ and of the summand
$\xi$ are comparable.

The rest of the paper is organised as follows.
Section \ref{sec:sums} is devoted to related upper bounds
for the distribution tails of sums with zero drift in the
of large deviation zone. Later on in Section \ref{sec:upper}
they serve for deriving upper bounds for $\Pr\{W_n>x\}$;
more precisely, we reduce the problem of finding
the asymptotic behaviour of $\Pr\{W_n>x\}$ to that for
$\Pr\{W_N>x\}$ with some fixed $N$. Also, upper bounds
of Section \ref{sec:sums} help to compute asymptotics
for $\Pr\{W_N>x\}$ for every fixed $N$. Lower bounds for the
distribution tail of the number of descendants in
the $n$th generation are given
in Section \ref{sec:lower.bounds}. In Section \ref{sec:proofs}
we provide final proofs of Theorems \ref{thm:irv},
\ref{thm:sq.root} and  \ref{thm:asy.uni.W}.
Finally, for Theorem \ref{T10} where only the first moment
is finite, our approach based on describing and computing
the most likely events leading to large deviations of $W_n$
doesn't work. Here we propose an analytic method adapted from
\cite{NV}, see Section \ref{sec:first.moment}.

\section{Preliminary results for sums}
\label{sec:sums}

We repeatedly make use of the following result which is a
version of Theorem 2(i) in \cite{DFK} with exactly
the same proof. In what follows
$\eta_1$, $\eta_2$, \ldots are independent random variables with
common distribution $G$ and $T_n:=\eta_1+\ldots+\eta_n$.

\begin{proposition}\label{S.star}
Let the distribution $G$ have negative mean $a:=\E\eta_1<0$.
If $G\in{\mathcal S}^*$ then
$$
\Pr\{T_n>x\}\le(1+o(1)) n\overline G(x)
$$
as $x\to\infty$ uniformly in $n$.
\end{proposition}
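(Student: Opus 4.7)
The plan is to combine the classical single big jump decomposition with the Pakes--Veraverbeke--Embrechts asymptotics for the random walk supremum, exactly as in the proof of Theorem 2(i) in~\cite{DFK}.

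Since $G\in\mathcal{S}^*$ is in particular long-tailed, I would first fix an auxiliary function $h$ with $h(x)\to\infty$, $h(x)=o(x)$, and $\overline G(x-h(x))\sim\overline G(x)$. Writing $M_n:=\max_{i\le n}\eta_i$, decompose
\[
\Pr\{T_n>x\}\le \Pr\{M_n>x-h(x)\}+\Pr\{T_n>x,\ M_n\le x-h(x)\}.
\]
The first term is at most $n\overline G(x-h(x))\sim n\overline G(x)$ by the union bound and $h$-insensitivity, which already produces the claimed right-hand side up to a $(1+o(1))$ factor. The remaining task is to show that the no-big-jump part is $o(n\overline G(x))$ uniformly in $n$.

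For that, I would split the range of $n$ at a threshold $N(x)$ of order $\int_x^\infty\overline G(y)\,dy/(|a|\overline G(x))$. For $n\ge N(x)$ the monotone bound $\Pr\{T_n>x\}\le\Pr\{\sup_{k\ge 0}T_k>x\}$ suffices, since the Pakes--Veraverbeke--Embrechts theorem (valid precisely because $G\in\mathcal{S}^*$ and $a<0$) gives $\Pr\{\sup_kT_k>x\}\sim\frac{1}{|a|}\int_x^\infty\overline G(y)\,dy$, which by the choice of $N(x)$ is $o(n\overline G(x))$ in this regime. For $n\le N(x)$, I would work with the truncated summands $\eta_i\wedge(x-h(x))$, whose mean is still negative for $x$ large, and iterate the convolution identity $\int_0^x\overline G(x-y)\overline G(y)\,dy\sim 2m\overline G(x)$ characterising $\mathcal{S}^*$; this rules out contributions from pairs of moderate jumps and keeps the resulting bound additive, rather than multiplicative, in $n$.

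The main obstacle is the matching of the two regimes at $n\asymp N(x)$: one needs the small-$n$ estimate to remain uniformly of order $o(n\overline G(x))$ all the way up to $n\asymp N(x)$, and not merely for $n$ bounded. This is exactly where $\mathcal{S}^*$ is used essentially rather than plain subexponentiality, since its defining identity is what converts the two-jump convolution estimate from a multiplicative $O(1)$ factor per step (the Kesten-type bound) into a genuinely additive per-step contribution, and simultaneously ensures that the integrated tail $\int_x^\infty\overline G$ itself lies in $\mathcal{S}$, so that the Pakes--Veraverbeke--Embrechts asymptotic is available for the large-$n$ regime.
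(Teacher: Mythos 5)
The paper does not reprove this result; it invokes Theorem~2(i) of \cite{DFK} ``with exactly the same proof'', so the comparison is against that argument rather than a proof displayed in the text. Your high-level plan --- a single-big-jump decomposition combined with the Pakes--Veraverbeke--Embrechts supremum asymptotics for the negative-drift walk --- does contain the right ingredients, and you are right that $\mathcal{S}^*$ is precisely what makes the supremum asymptotics available (it forces the integrated tail to be subexponential). There are, however, two genuine problems with the proposal as written.

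First, your boundary claim in the $n\ge N(x)$ regime fails. With $N(x)$ of order $|a|^{-1}\int_x^\infty\overline G(y)\,dy/\overline G(x)$ and $n\asymp N(x)$, the supremum estimate gives $\Pr\{\sup_k T_k>x\}\sim N(x)\overline G(x)\asymp n\overline G(x)$, which is $O(n\overline G(x))$ but \emph{not} $o(n\overline G(x))$. Since the big-jump term already contributes $(1+o(1))n\overline G(x)$, applying the supremum bound to the no-big-jump remainder would yield $(2+o(1))n\overline G(x)$ near the threshold, which is too weak. The fix is to use the supremum bound directly on $\Pr\{T_n>x\}$ for $n\ge N(x)$, with no decomposition at all, and reserve the decomposition for $n\le N(x)$. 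Second, and more seriously, the moderate-$n$ estimate is where the whole difficulty of the proposition sits, and ``work with truncated summands and iterate the convolution identity \dots keeps the resulting bound additive rather than multiplicative in $n$'' asserts the conclusion of the hard step rather than proving it. The defining $\mathcal{S}^*$ relation controls a single convolution of two tails; converting it into a uniform bound $\Pr\{T_n>x,\ M_n\le x-h(x)\}=o(n\overline G(x))$ for all $n\le N(x)$ requires a careful truncation-and-exponential-moment (or ladder-epoch) argument, and that is exactly the content of the proof in \cite{DFK}. As it stands, your proposal is an outline with the decisive estimate left unproved.
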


The latter proposition helps to deduce exact asymptotics for
$\Pr\{T_n>x\}$ in the case of zero mean if $x/n>c>0$.
If $x=o(n)$ then Proposition \ref{S.star} is not useful for
estimation of $\Pr\{T_n>x\}$ in the case of zero mean.
So, in the following two propositions we derive rough upper bounds
for the large deviation probabilities for sums with zero mean;
these rough bounds will be appropriate for our purposes.
The first proposition is devoted to distributions of regularly
varying type while the second one is devoted to Weibullian
type distributions. Deriving rather rough bounds,
we relax conditions on distribution of jumps
comparing to the asymptotic results of
\cite[Theorems 8.1 and 8.3]{DDS}
and \cite[Theorems 3.1.1, 4.1.2 and 5.2.1]{BBs}.

\begin{proposition}\label{thm:upper.bound.for.sum.R}
Let $\E\eta_1=0$, $\E\{\eta_1^2;\eta_1\le 0\}<\infty$ and
$G$ be a dominated varying distribution.

If, for some $\delta\in(0,1)$,
\begin{eqnarray}\label{sandw.1}
\E\{\eta^{1+\delta};\eta>0\}<\infty,
\end{eqnarray}
then, for every $\delta'\in(0,\delta)$, there exists $c<\infty$
such that $\Pr\{T_n>x\} \le cn\overline G(x)$
for all $x>0$ and $n\le x^{1+\delta'}$.

If, for some $\delta>0$,
\begin{eqnarray}\label{sandw}
\E\{\eta^{2+\delta};\eta>0\}<\infty,
\end{eqnarray}
then there exists $c<\infty$ such that $\Pr\{T_n>x\} \le cn\overline G(x)$
for all $x>0$ and $n\le x^2/c\log x$
{\rm(}or equivalently, $x\ge c\sqrt{n\log n}${\rm)}.
\end{proposition}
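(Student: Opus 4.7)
The plan is to apply a truncation + Fuk--Nagaev type argument to the centred sum, balancing the contribution from one-big-jump events against an exponentially concentrated residual. For an integer parameter $k$ to be chosen at the end (depending on $G$, $\delta$, $\delta'$), I truncate each $\eta_i$ at level $y = x/k$ and decompose
\[
\Pr\{T_n>x\} \le n\overline G(y) + \Pr\{\widehat T_n > x\},
\]
where $\widehat\eta_i := \eta_i \wedge y$ and $\widehat T_n := \sum_{i \le n}\widehat\eta_i$. Dominated variation, iterated as $\overline G(z) \le K\overline G(2z)$, gives $\overline G(x/k) \le k^{\log_2 K}\overline G(x)$, so the first term already has the required form up to the constant $k^{\log_2 K}$.

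For the truncated sum, I would estimate the mean and second moment of $\widehat\eta_i$: the former satisfies $|\E \widehat\eta_i| = \int_y^\infty \overline G(t)\,dt = O(y^{-\delta})$ by Markov, hence $n|\E\widehat\eta_i| = o(x)$ in both of the $n$-ranges considered. The latter, obtained by splitting into $\{\eta \le 0\}$ and $\{0 < \eta \le y\}$ and using $\eta^2 \le y^{1-\delta}\eta^{1+\delta}$ on the positive part together with $\E\{\eta^2;\eta\le 0\}<\infty$, is bounded by $C\,y^{1-\delta}$ under the first moment assumption, and simply by $\sigma^2 := \E\eta^2 < \infty$ under the second. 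Since the centred $\widehat\eta_i - \E\widehat\eta_i$ are bounded above by essentially $y$, a Fuk--Nagaev style inequality then yields
\[
\Pr\{\widehat T_n - n\E\widehat\eta_i > t\} \le \Bigl(\frac{e\,n\,\sigma_y^2}{t\,y}\Bigr)^{t/y}
\]
(valid in the big-jump regime $ty \gg n\sigma_y^2$). Applied with $t = x(1-o(1))$ and $y = x/k$, this gives residual bounds of the form $(C_1 n k^\delta/x^{1+\delta})^k$ in the first case and $(C_1 n k/x^2)^k$ in the second.

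The hard part is showing that these residual bounds can be absorbed into $c\,n\,\overline G(x)$. The key additional ingredient is the polynomial lower bound $\overline G(x) \ge c_0\, x^{-\log_2 K}$, obtained by iterating $\overline G(x) \ge \overline G(2x)/K$ in the reverse direction. In the first case, for $n \le x^{1+\delta'}$ the base $C_1 n k^\delta /x^{1+\delta}$ is $\le C_1 k^\delta x^{\delta'-\delta} \to 0$, and the full bound decays like $x^{-k(\delta-\delta')}$; choosing $k$ with $k(\delta-\delta') > \log_2 K$ forces the bound to be dominated by $n\overline G(x)$. In the second case, the Fuk--Nagaev bound is supplemented by Bennett's Gaussian inequality for the truncated sum, yielding $\Pr\{\widehat T_n > x\} \le C' x^{-c/(2\sigma^2)}$ in the range $n \le x^2/(c\log x)$; taking the constant $c$ in the hypothesis large enough relative to $\sigma^2\log_2 K$ then completes the absorption. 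Small values of $x$ are handled trivially via $\Pr\{T_n > x\} \le 1$ by enlarging the overall constant.
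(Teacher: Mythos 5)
Your first-moment case goes through: with $y=x/k$, the big-jump term is controlled by iterating dominated variation, $\overline G(x/k)\le Ck^{\log_2 K}\overline G(x)$; the truncated second moment $\sigma_y^2=O(y^{1-\delta})$ and the Fuk--Nagaev base $n\sigma_y^2 k/x^2 \lesssim k^\delta x^{\delta'-\delta}$ give residual $(C_1 k^\delta x^{\delta'-\delta})^k = O(x^{-k(\delta-\delta')})$, which for $k(\delta-\delta')>\log_2 K$ is below $c_0 x^{-\log_2 K}\le\overline G(x)$. That is a legitimate alternative to the paper's route, which instead runs exponential Chebyshev with the non-optimized, hazard-tied rate $\lambda=2R(x)/x$, $y=\varepsilon x$.

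The second case, however, has a genuine gap. Fix $y=x/k$ with $k$ constant. In the range $n\asymp x^2/(c\log x)$ you have $t\approx x$, $t/y\approx k$, $B=n\sigma_y^2\asymp\sigma^2 x^2/(c\log x)$, so $ty/B\asymp c\log x/(k\sigma^2)\to\infty$: you are in the \emph{big-jump} regime of Bennett, not the Gaussian one, and the correct bound is $(eB/(ty))^{t/y}\asymp(\text{const}/\log x)^k$. The claimed supplementary ``Bennett Gaussian'' bound $\exp\{-t^2/(2B)\}=x^{-c/(2\sigma^2)}$ is not available here; the exact Bennett exponent is $(B/y^2)h(ty/B)\sim(t/y)\log(ty/B)\sim k\log\log x$, yielding only $(\log x)^{-k}$. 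This cannot be absorbed into $cn\overline G(x)$. Take $\overline G(x)=x^{-(2+\delta)}$ (Pareto, satisfying all hypotheses) and $n\asymp x^2/(c\log x)$: then $n\overline G(x)\asymp x^{-\delta}/\log x$, while your residual is $(\log x)^{-k}$, and $(\log x)^{-k}/\bigl(x^{-\delta}/\log x\bigr)=x^{\delta}(\log x)^{1-k}\to\infty$. No choice of constant $k$ fixes this; making $y$ smaller (e.g.\ $y=x/\log x$) would improve the residual but destroys the big-jump term, since iterated dominated variation then only gives $\overline G(y)\le(\log x)^{\log_2 K}\overline G(x)$. The paper sidesteps this tension entirely: it does not optimize $\lambda$ but fixes $\lambda=2R(x)/x$ so that $e^{-\lambda x}=\overline G(x)^2$, and then uses $nR^2(x)/x^2\le R(x)$ in the stated range together with $R(x)\le C\log x$ to show the truncated exponential moment contributes at most $e^{R(x)}=1/\overline G(x)$, leaving a residual that is directly $\le\overline G(x)\le n\overline G(x)$ with no polynomial comparison needed. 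To repair your argument you would have to adopt something like that hazard-tied exponential tilting; the optimized Fuk--Nagaev choice is not strong enough here.
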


\begin{proof}
Let $R(x)$ be the hazard function for $G$, that is,
$\overline G(x)=e^{-R(x)}$. First prove that dominated
variation yields, for some $C<\infty$, the upper bound
\begin{eqnarray}\label{sandw.k}
R(x) &\le& C+C\log x,\quad x\ge 1.
\end{eqnarray}
Indeed, there exists $c<\infty$ such that
$\overline G(x/2)\le e^c\overline G(x)$ for all $x$.
Equivalently, $R(x/2)\ge R(x)-c$ which implies
$R(x2^{-n})\ge R(x)-cn$. For $n(x):=[\log_2 x]+1$ we get
\begin{eqnarray*}
R(1) &\ge& R(x2^{-n(x)})\\
&\ge& R(x)-cn(x)\ge R(x)-c\log_2 x-c
\end{eqnarray*}
and the upper bound \eqref{sandw.k} follows.

For every $y<x$, we may estimate
the tail distribution of the sum as follows:
\begin{eqnarray}\label{Tn.le}
\Pr\{T_n>x\} &\le& \Pr\{T_n>x,\eta_k>y
\mbox{ for some }k\le n\}
+\Pr\{T_n>x,\eta_k\le y\mbox{ for all }k\le n\}\nonumber\\
&\le& n\overline G(y)+e^{-\lambda x}
(\E\{e^{\lambda\eta_1};\eta_1\le y\})^n,
\end{eqnarray}
for every $\lambda>0$, by the exponential Chebyshev inequality.
Fix an $\varepsilon\in(0,1)$. Take $y:=\varepsilon x$ and
$\lambda:=2R(x)/x$.
Then $e^{-\lambda x}=\overline G(x)e^{-R(x)}$ and
\begin{eqnarray*}
\Pr\{T_n>x\} &\le& n\overline G(y)+\overline G(x)e^{-R(x)}
(\E\{e^{\lambda\eta_1};\eta\le y\})^n.
\end{eqnarray*}
Let us estimate the latter truncated exponential moment:
\begin{eqnarray}\label{estimate.for.R}
\E\{e^{\lambda\eta_1};\eta\le y\}
&=& \E\{e^{\lambda\eta_1};\eta_1\le 1/\lambda\}
+\E\{e^{\lambda\eta_1};1/\lambda<\eta_1\le y\}.
\end{eqnarray}
Since $e^u\le 1+u+2u^2$ for all $u\le 1$,
\begin{eqnarray}\label{estimate.for.2.R.gen}
\E\{e^{\lambda\eta_1};\eta_1\le 1/\lambda\}
&\le& 1+\lambda\E\{\eta_1;\eta_1\le 1/\lambda\}
+2\lambda^2\E\{\eta_1^2;\eta_1\le 1/\lambda\}\nonumber\\
&\le& 1+2\lambda^2\E\{\eta_1^2;\eta_1\le 1/\lambda\},
\end{eqnarray}
owing to the mean zero for $\eta_1$.

Consider the case of finite second moment where we get
\begin{eqnarray}\label{estimate.for.2.R}
\E\{e^{\lambda\eta_1};\eta_1\le 1/\lambda\} &\le& 1+c_1\lambda^2.
\end{eqnarray}
Further,
\begin{eqnarray*}
\E\{e^{\lambda\eta_1};1/\lambda<\eta_1\le y\}
&\le& e^{\lambda y} \overline G(1/\lambda)\\
&\le& e^{\lambda y} \E\{\eta^{2+\delta};\eta>0\}\lambda^{2+\delta},
\end{eqnarray*}
by the condition \eqref{sandw} and the Chebyshev inequality.
Choose $\varepsilon>0$ so small that $\varepsilon C<\delta/4$.
Then the upper bound \eqref{sandw.k} yields, for some $c_2<\infty$,
$$
e^{\lambda y}=e^{\varepsilon x 2R(x)/x}\le c_2x^{\delta/2}
$$
and consequently
\begin{eqnarray}\label{estimate.for.3.R}
\E\{e^{\lambda\eta_1};1/\lambda<\eta_1\le y\} &\le& c_3 \lambda^2.
\end{eqnarray}
Together with \eqref{estimate.for.2.R} it implies that
\begin{eqnarray*}
\E\{e^{\lambda\eta_1};\eta_1\le y\}
&\le& 1+c_4R^2(x)/x^2 \le e^{c_4R^2(x)/x^2},
\end{eqnarray*}
for some $c_4<\infty$. Hence,
\begin{eqnarray*}
\Pr\{T_n>x\} &\le& n\overline G(y)
+\overline G(x)e^{-R(x)} e^{c_4nR^2(x)/x^2}\\
&\le& n\overline G(y)
+\overline G(x)e^{-R(x)+R(x)(c_5n\log x/x^2)},
\end{eqnarray*}
for some $c_5<\infty$, due to \eqref{sandw.k}.
So, in the case of finite $2+\delta$ moment, the proposition conclusion
follows for $n\le x^2/c_5\log x$ if we take into account
\eqref{class.D}.

In the case where the condition \eqref{sandw.1} only holds,
$$
\E\{\eta_1^2;\eta_1\le 1/\lambda\}
\le \E\{\eta_1^2;\eta_1\le 0\}
+\E\{\eta_1^{1+\delta};\eta_1>0\}/\lambda^{1-\delta}
$$
and we deduce from the estimate \eqref{estimate.for.2.R.gen} that
\begin{eqnarray*}
\E\{e^{\lambda\eta_1};\eta_1\le 1/\lambda\}
&\le& 1+c_6\lambda^{1+\delta}.
\end{eqnarray*}
Similar to \eqref{estimate.for.3.R},
\begin{eqnarray*}
\E\{e^{\lambda\eta_1};1/\lambda\le \eta_1\le y\}
&\le& c_7/x^{1+\delta'}.
\end{eqnarray*}
by the condition \eqref{sandw.1}. Then
\begin{eqnarray*}
\E\{e^{\lambda\eta_1};\eta_1\le y\}
&\le& 1+c_8/x^{1+\delta'} \le e^{c_8/x^{1+\delta'}},
\end{eqnarray*}
because $R(x)\le c_9\log x$ by \eqref{sandw.k}. Hence,
\begin{eqnarray*}
\Pr\{T_n>x\} &\le& n\overline G(y)
+\overline G(x)e^{-R(x)} e^{c_8n/x^{1+\delta'}},
\end{eqnarray*}
and the case of finite first moment follows.
\end{proof}

\begin{proposition}\label{thm:upper.bound.for.sum.W}
Let the distribution $G$ have mean zero, $\E\eta_1=0$,
and all moments finite, $\E|\eta_1|^k<\infty$, $k=1$, $2$, \ldots.
Let $R(x)$ be the hazard function for $G$, that is,
$\overline G(x)=e^{-R(x)}$.
Suppose, for every $\varepsilon>0$, there exists $x_0$ such that
\begin{eqnarray}\label{cond.non.increasing}
R(x)/x &\le& (1+\varepsilon)R(z)/z\quad\mbox{ for all }x\ge z\ge x_0.
\end{eqnarray}
Then, for every $0<\varepsilon<1$, there exists
a $c=c(\varepsilon)<\infty$ such that
$$
\Pr\{T_n>x\} \le (n+1)\overline G(y)
$$
for all $x>0$, $y\le(1-\varepsilon)x$ and $n$ such that
$nR(y)/x^2\le 1/c$.
\end{proposition}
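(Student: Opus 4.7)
The plan mirrors the proof of Proposition~\ref{thm:upper.bound.for.sum.R}. I would start with the standard splitting
$$\Pr\{T_n > x\} \le n\overline G(y) + e^{-\lambda x}\bigl(\E\{e^{\lambda\eta_1};\eta_1\le y\}\bigr)^n, \qquad \lambda>0,$$
coming from conditioning on whether the maximum jump exceeds $y$ together with the exponential Chebyshev inequality on the truncated sum. The aim is to pick $\lambda$ so that the second summand is at most $\overline G(y)$, which then yields the desired $(n+1)\overline G(y)$.

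The natural choice is $\lambda:=2R(y)/x$, since this makes $e^{-\lambda x}=\overline G(y)\cdot e^{-R(y)}$. It then suffices to prove $M_y(\lambda):=\E\{e^{\lambda\eta};\eta\le y\}\le 1+C\lambda^2$ for a constant $C=C(\sigma^2,\varepsilon)$: raising to the $n$-th power and invoking $nR(y)/x^2\le 1/c$ gives $(M_y(\lambda))^n\le \exp(4CnR(y)^2/x^2)\le \exp(4CR(y)/c)\le e^{R(y)}$ as soon as $c\ge 4C$, whence the second term of the splitting is at most $\overline G(y)$.

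To control $M_y(\lambda)$ I would split at $1/\lambda=x/(2R(y))$:
$$M_y(\lambda)=\E\{e^{\lambda\eta};\eta\le 1/\lambda\}+\E\{e^{\lambda\eta};1/\lambda<\eta\le y\}.$$
The first piece is handled exactly as in Proposition~\ref{thm:upper.bound.for.sum.R} via $e^u\le 1+u+2u^2$ for $u\le 1$, the mean-zero property (which makes $\lambda\E\{\eta;\eta\le 1/\lambda\}\le 0$), and the finite second moment --- available here because all moments of $\eta_1$ are assumed finite --- giving a bound of $1+2\sigma^2\lambda^2$. The second piece is crudely majorized by $e^{\lambda y}\overline G(1/\lambda)=\exp\bigl(\lambda y-R(1/\lambda)\bigr)$.

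The hard part will be showing that this second piece is also $O(\lambda^2)$; this is where the specific hypotheses of the proposition are essential. Condition~\eqref{cond.non.increasing}, equivalent to the sub-linear growth of $R$, gives the quantitative comparison of $R(1/\lambda)$ with $R(y)$ at the two scales $1/\lambda=x/(2R(y))$ and $y$; together with $y\le(1-\varepsilon)x$, which ensures $\lambda y\le 2(1-\varepsilon)R(y)$ is strictly dominated by $\lambda x=2R(y)$, this begins to control the exponent $\lambda y-R(1/\lambda)$. The assumption that every moment of $\eta_1$ is finite then enters via Chebyshev, providing $\overline G(1/\lambda)\le (2R(y)/x)^k\E|\eta|^k$ for any $k$; choosing $k$ large enough (depending on $\varepsilon$ but not on $x,y,n$) and combining with the comparison from \eqref{cond.non.increasing} gives the required bound $\E\{e^{\lambda\eta};1/\lambda<\eta\le y\}\le C'\lambda^2$. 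Once $M_y(\lambda)\le 1+C\lambda^2$ is established, the argument closes as indicated and produces $\Pr\{T_n>x\}\le(n+1)\overline G(y)$ for all admissible $x$, $y$, $n$.
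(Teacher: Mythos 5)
Your setup---big-jump splitting plus exponential Chebyshev, truncating the jump MGF at $1/\lambda$, and closing via $(1+C\lambda^2)^n\le e^{Cn\lambda^2}$ with the constraint $nR(y)/x^2\le 1/c$---is exactly the paper's framework, and your bookkeeping at the end is fine. But the crucial middle step, bounding $\E\{e^{\lambda\eta};1/\lambda<\eta\le y\}$, does not go through with the estimate you propose, and the gap is not a technicality.

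The crude bound $\E\{e^{\lambda\eta};1/\lambda<\eta\le y\}\le e^{\lambda y}\overline G(1/\lambda)$ is not $O(\lambda^2)$; it actually diverges. With your choice $\lambda=2R(y)/x$ and $y\le(1-\varepsilon)x$, the exponential prefactor is $e^{\lambda y}\le e^{2(1-\varepsilon)R(y)}$, which blows up exponentially in $R(y)$, while the Chebyshev factor $\overline G(1/\lambda)\le\lambda^k\E|\eta|^k$ only decays polynomially in $\lambda$, and $R(y)$ is not comparable to $\log(1/\lambda)$ (for Weibull $R(y)=y^\beta$ one has $R(y)\gg\log(1/\lambda)$). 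A more careful look at $R(1/\lambda)$ via \eqref{cond.non.increasing} gives only $R(1/\lambda)\ge\text{const}$, not a gain proportional to $R(y)$, so $\lambda y-R(1/\lambda)\to+\infty$. The same divergence occurs even with the paper's choice of $\lambda$; the problem is the factoring-out of $e^{\lambda y}$, which is exactly what one cannot afford in the Weibullian regime.

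Two ideas from the paper's proof are needed here, and both are missing from your sketch. First, the tilting parameter is $\lambda=(1+\varepsilon)R(y)/x$, not $2R(y)/x$; this is what makes $\lambda y\le(1-\varepsilon^2)R(y)<R(y)$, and via \eqref{cond.non.increasing} it upgrades to the pointwise inequality $\lambda z\le(1-\varepsilon^2/2)R(z)$ for all $z\in[x_0,y]$. With your $\lambda$, $\lambda y\le2(1-\varepsilon)R(y)$ exceeds $R(y)$ whenever $\varepsilon<1/2$, so no such pointwise domination is available. Second, instead of pulling $e^{\lambda y}$ outside the expectation, the paper uses the pointwise bound $e^{\lambda z}\le e^{(1-\varepsilon^2/2)R(z)}$ inside the integral and then integrates by parts against $d\overline G(z)=-e^{-R(z)}dR(z)$:
\begin{equation*}
\E\{e^{\lambda\eta};1/\lambda<\eta\le y\}
\le\int_{1/\lambda}^\infty e^{-\varepsilon^2 R(z)/2}\,dR(z)
=\frac{2}{\varepsilon^2}\,e^{-\varepsilon^2 R(1/\lambda)/2},
\end{equation*}
which is $o(\lambda^\alpha)$ for every $\alpha$ because all moments are finite (so $\overline G(u)=o(u^{-\alpha})$, hence $e^{-\varepsilon^2R(1/\lambda)/2}=\overline G(1/\lambda)^{\varepsilon^2/2}=o(\lambda^{\alpha\varepsilon^2/2})$), in particular $o(\lambda^2)$. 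Without this change-of-variable step your $e^{\lambda y}$ factor cannot be absorbed, and the proposed bound $\E\{e^{\lambda\eta};1/\lambda<\eta\le y\}\le C'\lambda^2$ is unobtainable.
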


\begin{proof}
Take $\lambda:=(1+\varepsilon)R(y)/x$. Then
$e^{-\lambda x}=e^{-(1+\varepsilon)R(y)}$.

By the condition \eqref{cond.non.increasing},
$$
\lambda z = (1+\varepsilon)\frac{R(y)}{y}\frac{y}{x}z
\le (1-\varepsilon^2)\frac{R(y)}{y}z
\le (1-\varepsilon^2/2)R(z)
$$
for all $z\le y$ sufficiently large. Therefore,
\begin{eqnarray*}
\E\{e^{\lambda\eta_1};1/\lambda<\eta_1\le y\}
&\le& \E\{e^{(1-\varepsilon^2/2)R(\eta_1)};1/\lambda<\eta_1\}\\
&\le& -\int_{1/\lambda}^\infty e^{(1-\varepsilon^2/2)R(z)}de^{-R(z)}\\
&=& \int_{1/\lambda)}^\infty e^{-\varepsilon^2 R(z)/2}dR(z)\\
&=& 2e^{-\varepsilon^2 R(1/\lambda)/2}/\varepsilon^2.
\end{eqnarray*}
Taking into account that, for every $\alpha>0$,
$e^{-R(x)}=o(1/x^\alpha)$ as $x\to\infty$, we get
\begin{eqnarray}\label{estimate.for.3}
\E\{e^{\lambda\eta_1};1/\lambda<\eta_1\le y\}
&=& o(\lambda^2)\quad\mbox{ as }y\to\infty.
\end{eqnarray}
Substituting \eqref{estimate.for.2.R} and \eqref{estimate.for.3}
into \eqref{estimate.for.R} we obtain the following inequality
\begin{eqnarray*}
\E\{e^{\lambda\eta_1};\eta_1\le y\}
&\le& 1+c R^2(y)/x^2 \le e^{c R^2(y)/x^2},
\end{eqnarray*}
for some $c<\infty$. Hence,
\begin{eqnarray*}
\Pr\{T_n>x\} &\le& n\overline G(y)
+e^{-(1+\varepsilon)R(y)}e^{c_1n R^2(y)/x^2}\\
&\le& n\overline G(y)+e^{-R(y)}=(n+1)\overline G(y)
\end{eqnarray*}
in the range where $c_1nR(y)/x^2\le\varepsilon$
and the proof of the desired upper bound is complete.
\end{proof}

In the proof above the distribution $G$ restricted to
$(-\infty,1/\lambda]$ comes into the upper bound through its
second moment only. The tail of $G$ influences the upper
bound though its values right to the point $1/\lambda$.
Having this observation in mind, we formulate
the following uniform version of the previous proposition
for a family of distributions whose tails are
ultimately dominated by that of $G$.

\begin{corollary}\label{cor:upper.bound.for.sum.W}
Let all the conditions of Proposition
\ref{thm:upper.bound.for.sum.W} be fulfilled.
Let $G^{(v)}$ be a family of distributions
depending on some parameter $v\in V$ such that,
for some $x_1$, $\overline{G^{(v)}}(x)\le\overline G(x)$
for all $x>x_1$ and $v\in V$. Let every $G^{(v)}$
have mean zero and let all the second moments be bounded.
Then, for every $0<\varepsilon<1$, there exists
a $c=c(\varepsilon)<\infty$ such that
$$
\overline{(G^{(v)})^{*n}}(x) \le (n+1)\overline{G^{(v)}}(y)
$$
for all $v\in V$, $x>0$, $y\le(1-\varepsilon)x$ and $n$
such that $nR(y)/x^2\le 1/c$.
\end{corollary}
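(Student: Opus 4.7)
The plan is to run the proof of Proposition \ref{thm:upper.bound.for.sum.W} verbatim with $\eta_i$ replaced by $\eta_i^{(v)}$, invoking the two uniform hypotheses—bounded second moments and tail domination—at exactly the two places where the proof of the proposition used the corresponding properties of $G$. The remark preceding the corollary makes the split explicit: $G$ enters the original argument only through (i) its second moment on $(-\infty,1/\lambda]$ and (ii) its tail on $(1/\lambda,\infty)$, so the argument adapts uniformly in $v$.

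Concretely, I would start from the analogue of \eqref{Tn.le},
$$
\Pr\{T_n^{(v)} > x\} \le n\overline{G^{(v)}}(y) + e^{-\lambda x}\bigl(\E\{e^{\lambda\eta_1^{(v)}};\eta_1^{(v)} \le y\}\bigr)^n,
$$
and again take $\lambda := (1+\varepsilon)R(y)/x$, where $R$ is the hazard function of the dominating $G$. For $y$ sufficiently large (uniformly in $v$) one has $1/\lambda > x_1$, so the domination $\overline{G^{(v)}} \le \overline{G}$ is available throughout $(1/\lambda,\infty)$. Splitting the truncated exponential moment at $1/\lambda$ as in \eqref{estimate.for.R}, the contribution from $\eta^{(v)}\le 1/\lambda$ is treated by the inequality $e^u\le 1+u+2u^2$ for $u\le 1$, mean zero, and the uniform bound $\sup_v \E(\eta^{(v)})^2 \le C$; this yields $\E\{e^{\lambda\eta^{(v)}};\eta^{(v)} \le 1/\lambda\}\le 1 + 2C\lambda^2$ with $C$ independent of $v$, which is the counterpart of \eqref{estimate.for.2.R}.

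For the middle piece $1/\lambda<\eta^{(v)}\le y$, I would repeat the chain of estimates leading to \eqref{estimate.for.3}. The key point is that the pointwise bound $e^{\lambda z}\le e^{(1-\varepsilon^2/2)R(z)}$ used there (valid by \eqref{cond.non.increasing} for $R$) depends only on $\lambda$ and the hazard function of the dominating $G$, not on $G^{(v)}$ itself. Thus the same integration-by-parts computation goes through with $\overline{G^{(v)}}$ in place of $\overline{G}$ as the surviving survival function; the extra application of $\overline{G^{(v)}}(z)\le\overline{G}(z)$ on $(x_1,\infty)$ then absorbs this into the same $O(e^{-\varepsilon^2 R(1/\lambda)/2})=o(\lambda^2)$ estimate, uniformly in $v$. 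Combining the two pieces gives $(\E\{e^{\lambda\eta^{(v)}};\eta^{(v)}\le y\})^n\le e^{c_1 nR^2(y)/x^2}$; in the range $nR(y)/x^2\le\varepsilon/c_1$ the exponential term is therefore at most $e^{-R(y)}$, and after combining with the $n\overline{G^{(v)}}(y)$ contribution and the tail domination one obtains the stated uniform bound with $c:=c_1/\varepsilon$.

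The only real obstacle is step (ii)—checking that the integration-by-parts trick of \eqref{estimate.for.3} still yields an $o(\lambda^2)$ bound when applied to $\overline{G^{(v)}}$ rather than $\overline{G}$. This works precisely because both the hazard-function inequality coming from \eqref{cond.non.increasing} and the pointwise domination $\overline{G^{(v)}}\le\overline{G}$ on $(x_1,\infty)$ are uniform in $v$, so no additional regularity on the family $\{G^{(v)}\}$ beyond bounded second moments is needed.
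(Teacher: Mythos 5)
Your proposal follows the paper's intended argument exactly: the paper gives no separate proof of Corollary \ref{cor:upper.bound.for.sum.W}, only the remark preceding it, and you correctly flesh out that remark by inserting the uniform second-moment bound into the step yielding the analogue of \eqref{estimate.for.2.R} and using $\overline{G^{(v)}}\le\overline G$ on $(x_1,\infty)$ together with \eqref{cond.non.increasing} for the dominating hazard $R$ to recover the $o(\lambda^2)$ bound of \eqref{estimate.for.3} via the layer-cake/integration-by-parts identity.

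One small point worth noticing, though it is inherited from the statement rather than introduced by you: chasing the argument through, the two pieces actually give
$$
\Pr\{T_n^{(v)}>x\}\;\le\; n\,\overline{G^{(v)}}(y)\;+\;e^{-(1+\varepsilon)R(y)}e^{c_1 nR^2(y)/x^2}\;\le\; n\,\overline{G^{(v)}}(y)+\overline G(y)\;\le\;(n+1)\,\overline G(y),
$$
since the Chebyshev part produces $e^{-R(y)}=\overline G(y)$, \emph{not} $\overline{G^{(v)}}(y)$. The assumed domination only gives $\overline{G^{(v)}}\le\overline G$, not the reverse, so the bound $(n+1)\overline{G^{(v)}}(y)$ printed in the corollary (and reproduced in your last sentence as ``the stated uniform bound'') does not quite follow; what follows, and what is actually used in the application in the proof of Theorem \ref{thm:asy.uni.W}, is the weaker $(n+1)\overline G(y)$. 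It would be worth stating this explicitly so the reader doesn't think the stronger per-$v$ bound has been established.
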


\section{Lower bounds}
\label{sec:lower.bounds}

\begin{lemma}\label{l:lower}
Let $\E\xi\log\xi<\infty$. Then, for every $\varepsilon>0$,
$$
\Pr\{W_n>x\} \ge (1+o(1))\sum_{i=0}^{n-1} m^i
\overline F(m^{i+1}(1+\varepsilon)x)
$$
as $x\to\infty$ uniformly in $n\ge 1$.
\end{lemma}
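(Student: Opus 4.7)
The plan is to formalise the heuristic $\{W_n>x\}\approx\bigcup_{k=0}^{n-1}\{\max_{i\le Z_k}\xi_i^{(k)}\ge m^{k+1}x\}$ displayed in the introduction. Set $u_k := m^{k+1}(1+\varepsilon)x$, $p_k := \overline F(u_k)$, $a_k := m^kp_k$, $A_k := \{\max_{i\le Z_k}\xi^{(k)}_i\ge u_k\}$, and, for a large constant $M$ to be sent to infinity at the end, $\tilde A_k := A_k\cap\{Z_k\le Mm^k\}$. I will establish: (i) $\Pr\{W_n > x\mid A_k\}\to 1$ uniformly in $k,n$ as $x\to\infty$; (ii) $\Pr(\tilde A_k)\ge(1-\varepsilon_M-o(1))a_k$, with $\varepsilon_M:=\sup_k\E[W_k;W_k>M]\to 0$ as $M\to\infty$; and (iii) $\sum_{j<k}\Pr(\tilde A_j\cap\tilde A_k)=o(\sum_k a_k)$. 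Granted these, the disjoint sets $\tilde B_k:=\tilde A_k\setminus\bigcup_{j<k}\tilde A_j$ satisfy
$$
\Pr\{W_n>x\}\ge\sum_k\Pr\{W_n>x,\tilde B_k\}\ge(1-o(1))\Pr\Bigl(\bigcup_k\tilde A_k\Bigr)\ge(1-\varepsilon_M-o(1))\sum_k a_k,
$$
and letting $M\to\infty$ after $x\to\infty$ finishes the proof.

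For (i), if a particle $v$ in generation $k$ has $N_v\ge u_k$ offspring, then the descendants in generation $n$ of these offspring form an i.i.d.\ sum $\sum_{i=1}^{N_v}Z^{(i)}_{n-k-1}$ independent of the rest of the tree, contributing $m^{-(k+1)}\sum_iW^{(i)}_{n-k-1}$ to $W_n$. Under $\E\xi\log\xi<\infty$, Kesten--Stigum gives that $\{W_m\}$ is uniformly integrable with unit mean; truncating the summands at a large constant $M'$ (for which $\inf_m\E[W_m\wedge M']\ge 1-\delta$) and applying Chebyshev's inequality then yields $\sum_iW^{(i)}_{n-k-1}\ge N_v(1-2\delta)$ with probability $1-o(1)$, whence $W_n\ge(1+\varepsilon)(1-2\delta)x>x$ for small $\delta$. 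For (ii), conditioning on $\mathcal F_k$ gives $\Pr\{A_k\mid\mathcal F_k\}=1-(1-p_k)^{Z_k}\ge p_kZ_k-\tfrac12p_k^2Z_k^2$; restricting to $\{Z_k\le Mm^k\}$ and using uniform integrability of $\{W_k\}$,
$$
\Pr(\tilde A_k)\ge p_k\,\E[Z_k;Z_k\le Mm^k]-\tfrac12p_k^2\,Mm^{2k}\ge a_k(1-\varepsilon_M)-\tfrac12Ma_k^2,
$$
and the second term is $o(a_k)$ uniformly because $\sup_k a_k\to 0$ (a consequence of $y\overline F(y)\to 0$, which itself follows from $\E\xi<\infty$).

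For (iii), conditioning on $\mathcal F_k$ and using the bound $1-(1-p_k)^{Z_k}\le p_kZ_k$ together with $Z_k\le Mm^k$ on $\tilde A_k$,
$$
\Pr(\tilde A_j\cap\tilde A_k)\le\E\bigl[\mathbf 1_{\tilde A_j}\,p_kZ_k\mathbf 1_{\{Z_k\le Mm^k\}}\bigr]\le Mp_km^k\,\Pr(\tilde A_j)\le M a_j a_k,
$$
so $\sum_{j<k}\Pr(\tilde A_j\cap\tilde A_k)\le\tfrac12M(\sum_k a_k)^2=o(\sum_k a_k)$ since $\sum a_k\to 0$. The main obstacle that this truncation bypasses is that, under the sole hypothesis $\E\xi\log\xi<\infty$, a naive bound on $\Pr(A_j\cap A_k)$ involves the second moment $\E Z_j^2$, which may be infinite; the truncation replaces the problematic term by a harmless factor of $M$, at the cost of a small defect $\varepsilon_M$ in (ii) which is sent to zero by letting $M\to\infty$ using Kesten--Stigum uniform integrability. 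Uniformity in $n$ is maintained throughout by the uniform estimate $\sum_k a_k\to 0$, derived from $\E\xi<\infty$ via a Riemann-sum comparison with $\int_{m(1+\varepsilon)x}^\infty\overline F(u)\,du\to 0$.
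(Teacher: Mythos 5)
Your argument is correct, and it reaches the same asymptotic $\Pr(A_k)\sim m^k\overline F(m^{k+1}(1+\varepsilon)x)$ and the same conditional step as the paper, but via a genuinely different technical route. The paper makes the big-jump events disjoint by construction: it sets $B_k(x)=\{Z_j\le m^jx\text{ for all }j\le k\}$ and defines $A_k(x)=\{B_k(x),\ \xi_i^{(k)}>m^{k+1}(1+\varepsilon)x\text{ for some }i\le Z_k\}$, so the $A_k$ are automatically pairwise disjoint (a large $\xi^{(j)}$ at a level $j<k$ would violate $B_k$), and then one only needs $\inf_k\Pr(B_k(x))\to1$ plus Kesten--Stigum uniform integrability to get $\Pr(A_k)\sim m^k\overline F(m^{k+1}(1+\varepsilon)x)$. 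You instead keep the natural non-disjoint events $A_k=\{\max_{i\le Z_k}\xi^{(k)}_i\ge u_k\}$, truncate at $Z_k\le Mm^k$, and control the double overlaps with a Bonferroni bound $\Pr(\tilde A_j\cap\tilde A_k)\le Ma_ja_k$, sending $M\to\infty$ at the end. Both approaches lean on Kesten--Stigum uniform integrability to identify the leading constant. Your truncation $Z_k\le Mm^k$ is tighter than the paper's $Z_k\le m^kx$, which is why your estimate of $\Pr(\tilde A_k)$ only needs $y\overline F(y)\to0$ (from $\E\xi<\infty$), whereas the paper uses the full strength of $\E\xi\log\xi<\infty$ via Chebyshev to get $\overline F(m^{k+1}(1+\varepsilon)x)=o(m^{-k}/x)$ uniformly in $k$; the trade-off is that you have to manage the overlap terms and an extra limit in $M$, which the paper's disjoint construction avoids. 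One point worth making explicit in your write-up: the conditional estimate in (i) should be stated given $\mathcal F_{k+1}$ rather than given $A_k$, since you apply it on the events $\tilde B_k\subsetneq A_k$; the Chebyshev bound $1-(M')^2/(N_v\delta^2)$ is indeed uniform over $\mathcal F_{k+1}$-measurable events contained in $A_k$, so the argument is sound, but the phrasing should match the use.
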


\begin{proof}
Consider the following decreasing sequence of events
$$
B_k(x):=\{Z_j\le m^jx\mbox{ for all }j\le k\}.
$$
Since $Z_j/m^j\to W$ a.s. as $j\to\infty$,
\begin{eqnarray}\label{bound.for.Bk}
\inf_{k\ge 1}\Pr\{B_k(x)\} &\to& 1\quad\mbox{as }x\to\infty.
\end{eqnarray}
The events
$$
A_k(x):=
\{B_k(x),\xi_i^{(k)}>m^{k+1}(1+\varepsilon)x\mbox{ for some }i\le Z_k\}
$$
are disjoint which implies the lower bound
\begin{eqnarray}\label{Wn.xi}
\Pr\{W_n>x\} &\ge& \sum_{k=0}^{n-1}
\Pr\{Z_n>m^nx\mid A_k(x)\}\Pr\{A_k(x)\}.
\end{eqnarray}
First we estimate the probability
\begin{eqnarray*}
\Pr\{A_k(x)\} &=& \sum_{j=0}^{m^kx} \Pr\{B_k(x),Z_k=j\}
\Pr\{\xi_i^{(k)}>m^{k+1}(1+\varepsilon)x\mbox{ for some }i\le j\}\\
&=& \sum_{j=0}^{m^kx} \Pr\{B_k(x),Z_k=j\}
\bigl(1-\bigl(1-\overline F(m^{k+1}(1+\varepsilon)x)\bigr)^j\bigr).
\end{eqnarray*}
Since $\E\xi\log\xi<\infty$, by the Chebyshev inequality
$$
\Pr\{\xi>m^{k+1}(1+\varepsilon)x\} \le
\frac{\E\xi\log\xi}{m^{k+1}x\log x}=o(1/m^kx)
\quad\mbox{as }x\to\infty\mbox{ uniformly in }k.
$$
Hence,
$$
\bigl(1-\overline F(m^{k+1}(1+\varepsilon)x)\bigr)^j
=1-j\overline F(m^{k+1}(1+\varepsilon)x)(1+o(1))
$$
as $x\to\infty$ uniformly in $k\ge 0$ and $j\le m^kx$.
Therefore,
\begin{eqnarray*}
\Pr\{A_k(x)\} &=& (1+o(1))\overline F(m^{k+1}(1+\varepsilon)x)
\sum_{j=0}^{m^kx} j\Pr\{B_k(x),Z_k=j\}
\end{eqnarray*}
as $x\to\infty$ uniformly in $k\ge 0$.
The Kesten--Stigum theorem (see, e.g \cite[Theorem 2.1]{A1983})
states, in particular, that $\E\xi\log\xi<\infty$
if and only if the family of
random variables $\{W_n, n\ge 0\}$ is uniformly integrable.
Therefore, it follows from \eqref{bound.for.Bk} that
$$
\E\{W_k;B_k(x),W_k\le x\}\to 1\quad\mbox{as }x\to\infty
\mbox{ uniformly in }k.
$$
By this reason,
\begin{eqnarray*}
\sum_{j=0}^{m^kx} j\Pr\{B_k(x),Z_k=j\}
&=& \E\{Z_k;B_k(x),Z_k\le m^kx\}\\
&=& m^k\E\{W_k;B_k(x),W_k\le x\}\sim m^k
\end{eqnarray*}
as $x\to\infty$ uniformly in $k\ge 0$. Thus,  uniformly in $k\ge 0$,
\begin{eqnarray}\label{Wn.xi.1}
\Pr\{A_k(x)\} &=& (1+o(1))m^k\overline F(m^{k+1}(1+\varepsilon)x)
\quad\mbox{as }x\to\infty.
\end{eqnarray}

Second we prove that
\begin{eqnarray}\label{Wn.xi.2}
\inf_{n\ge 1,\ k\le n-1}\Pr\{Z_n>m^n(1+\varepsilon)x\mid A_k(x)\} &\to& 1
\quad\mbox{as }x\to\infty.
\end{eqnarray}
Indeed, by the Markov property,
\begin{eqnarray*}
\Pr\{Z_n>m^nx\mid A_k(x)\} &\ge&
\Pr\Bigl\{\sum_{j=1}^{m^{k+1}(1+\varepsilon)x}
Z_{n-k-1,j}>m^nx\Bigr\}\\
&=& \Pr\Bigl\{\sum_{j=1}^{m^{k+1}(1+\varepsilon)x}
W_{n-k-1,j}>m^{k+1}x\Bigr\},
\end{eqnarray*}
where $Z_{n-k-1,j}$ are independent copies of $Z_{n-k-1}$
and $W_{n-k-1,j}$ are independent copies of $W_{n-k-1}$.
Since the family $\{W_n\}$ is uniformly integrable and
$\E W_n=1$ for every $n$, we may apply the law of large numbers
which ensures that
$$
\frac{1}{m^{k+1}x}\sum_{j=1}^{m^{k+1}(1+\varepsilon)x}W_{n-k-1,j}
\stackrel{p}{\to} (1+\varepsilon)\E W_{n-k-1}=1+\varepsilon
$$
as $x\to\infty$ uniformly in $n\ge 1$ and $k\le n-1$.
Therefore,
\begin{eqnarray*}
\Pr\Bigl\{\sum_{j=1}^{m^{k+1}(1+\varepsilon)x}
W_{n-k-1,j}>m^{k+1}x\Bigr\} &\to& 1,
\end{eqnarray*}
which justifies the convergence \eqref{Wn.xi.2}.
Substituting \eqref{Wn.xi.1} and \eqref{Wn.xi.2} into
\eqref{Wn.xi}, we deduce the desired lower bound uniform in $n$.
\end{proof}

\begin{lemma}\label{l:lower.2}
Let the distribution $F$ have the second moment finite,
$\sigma^2:={\rm Var}\xi_1<\infty$. Then, for every $A>0$,
\begin{eqnarray}\label{lower.11}
\Pr\{W_n>x\} &\ge& \Bigl(1-\frac{\sigma^2}{(m^2-m)A^2}+o(1)\Bigr)
\sum_{i=0}^{n-1} m^i
\overline F(m^{i+1}x+A\sqrt{m^{i+1}x})
\end{eqnarray}
as $x\to\infty$ uniformly in $n$.

In particular, if additionally the distribution $F$
is $\sqrt x$-insensitive, then
\begin{eqnarray}\label{lower.22}
\Pr\{W_n>x\} &\ge& (1+o(1))\sum_{i=0}^{n-1} m^i
\overline F(m^{i+1}x)\quad\mbox{ as }x\to\infty
\mbox{ uniformly in }n.
\end{eqnarray}
\end{lemma}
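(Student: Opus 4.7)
The plan is to mirror the three-step scheme of Lemma~\ref{l:lower}, but to sharpen the excess in the big-jump event from the multiplicative $(1+\varepsilon)m^{k+1}x$ to the additive $m^{k+1}x+A\sqrt{m^{k+1}x}$, and to replace the law-of-large-numbers argument at the propagation step by Chebyshev's inequality, exploiting the uniform bound ${\rm Var}\,W_n\le\sigma^2/(m^2-m)$. Concretely, I would keep the truncation event $B_k(x):=\{Z_j\le m^jx\mbox{ for all }j\le k\}$ (whose probability still tends to $1$ uniformly in $k$) and define the disjoint events
$$
A_k(x):=\bigl\{B_k(x),\ \xi_i^{(k)}>m^{k+1}x+A\sqrt{m^{k+1}x}\mbox{ for some }i\le Z_k\bigr\},
$$
yielding the decomposition $\Pr\{W_n>x\}\ge\sum_{k=0}^{n-1}\Pr\{Z_n>m^nx\mid A_k(x)\}\Pr\{A_k(x)\}$.

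The evaluation of $\Pr\{A_k(x)\}$ then copies Lemma~\ref{l:lower}: since $\E\xi^2<\infty$ and $\overline F$ is non-increasing one has $\overline F(y)=o(1/y^2)$, so uniformly in $k\ge 0$ and $j\le m^kx$ the linearisation $(1-\overline F(m^{k+1}x+A\sqrt{m^{k+1}x}))^j=1-j\overline F(m^{k+1}x+A\sqrt{m^{k+1}x})(1+o(1))$ is valid, and the uniform integrability of $\{W_k\}$ (Kesten--Stigum) gives
$$
\Pr\{A_k(x)\}=(1+o(1))\,m^k\overline F\bigl(m^{k+1}x+A\sqrt{m^{k+1}x}\bigr)
$$
uniformly in $k\ge 0$. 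For the propagation step, the Markov property provides, conditionally on $A_k(x)$, at least $N:=\lceil m^{k+1}x+A\sqrt{m^{k+1}x}\rceil$ independent trees alive in generation $k+1$, each producing in generation $n$ an independent copy of $Z_{n-k-1}$. Since $Z_n\ge m^{n-k-1}\sum_{j=1}^N W_{n-k-1,j}$, I have $\{Z_n>m^nx\}\supset\{\sum_{j=1}^N W_{n-k-1,j}>m^{k+1}x\}$, and Chebyshev's inequality applied to the deviation $N-m^{k+1}x=A\sqrt{m^{k+1}x}(1+o(1))$ below the mean $N\cdot\E W_{n-k-1}=N$ yields
$$
\Pr\{Z_n>m^nx\mid A_k(x)\}\ge 1-\frac{N\,{\rm Var}\,W_{n-k-1}}{A^2 m^{k+1}x(1+o(1))}\ge 1-\frac{\sigma^2}{(m^2-m)A^2}-o(1)
$$
uniformly in $n\ge 1$ and $k\le n-1$. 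Substituting the two estimates into the decomposition yields \eqref{lower.11}.

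For the $\sqrt x$-insensitive version \eqref{lower.22} it remains to observe that for every fixed $A>0$ and $\varepsilon>0$ the insensitivity furnishes $y_0$ with $\overline F(y+A\sqrt y)\ge(1-\varepsilon)\overline F(y)$ for all $y\ge y_0$; once $x$ is so large that $mx\ge y_0$ this extends uniformly over all $y=m^{i+1}x$, $i\ge 0$. Substituting into \eqref{lower.11} and then letting $\varepsilon\downarrow 0$ and $A\to\infty$ drives the prefactor to $1$ and gives \eqref{lower.22}. The main technical point is the Chebyshev step: the additive buffer $A\sqrt{m^{k+1}x}$ is the smallest scale on which the variance of $\sum_{j=1}^N W_{n-k-1,j}$, of order $m^{k+1}x\cdot\sigma^2/(m^2-m)$, can be absorbed uniformly in $n$ while keeping the propagation probability bounded below by $1-\sigma^2/((m^2-m)A^2)$.
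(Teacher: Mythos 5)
Your proposal reproduces the paper's own proof essentially verbatim: the same truncation events $B_k(x)$ and disjoint big-jump events $A_k(x)$ with additive buffer $A\sqrt{m^{k+1}x}$, the same evaluation of $\Pr\{A_k(x)\}$ via the linearisation and Kesten--Stigum uniform integrability, the same propagation bound via Chebyshev combined with Harris's formula ${\rm Var}\,W_n=\sigma^2(1-m^{-n})/(m^2-m)\le\sigma^2/(m^2-m)$, and the same passage $A\to\infty$ under $\sqrt x$-insensitivity. The argument is correct and there is no substantive difference from the paper.
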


\begin{proof}
Let events $B_k(x)$ be defined as above and
$$
A_k(x):=
\{B_k(x),\xi_i^{(k)}>m^{k+1}x+A\sqrt{m^{k+1}x}
\mbox{ for some }i\le Z_k\}
$$
which again are disjoint which implies the lower bound \eqref{Wn.xi}.
The same calculations as in the previous proof lead to
the relation, uniformly in $k\ge 0$,
\begin{eqnarray}\label{Wn.xi.squre.1}
\Pr\{A_k(x)\} &=&
(1+o(1))m^k\overline F(m^{k+1}x+A\sqrt{m^{k+1}x})
\quad\mbox{as }x\to\infty.
\end{eqnarray}
Then it remains to prove that
\begin{eqnarray}\label{Wn.xi.squre.2}
\liminf_{x\to\infty}\inf_{n\ge 1,\ k\le n-1}
\Pr\{Z_n>m^nx\mid A_k(x)\} &\ge& 1-\frac{\sigma^2}{(m^2-m)A^2}.
\end{eqnarray}
Indeed,
\begin{eqnarray*}
\Pr\{Z_n>m^nx\mid A_k(x)\} &\ge&
\Pr\Bigl\{\sum_{j=1}^{m^{k+1}x+A\sqrt{m^{k+1}x}}
Z_{n-k-1,j}>m^nx\Bigr\}\\
&=& \Pr\Bigl\{\sum_{j=1}^{m^{k+1}x+A\sqrt{m^{k+1}x}}
W_{n-k-1,j}>m^{k+1}x\Bigr\}\\
&=& \Pr\Bigl\{\sum_{j=1}^{m^{k+1}x+A\sqrt{m^{k+1}x}}
(W_{n-k-1,j}-1)>-A\sqrt{m^{k+1}x}\Bigr\},
\end{eqnarray*}
since $\E W_n=1$.
Applying the Chebyshev's inequality, we deduce
\begin{eqnarray*}
\Pr\{Z_n>m^nx\mid A_k(x)\} &\ge&
1-\frac{{\rm Var}W_{n-k-1}}{A^2}
\frac{m^{k+1}x+A\sqrt{m^{k+1}x}}{m^{k+1}x}\\
&=& 1-\frac{{\rm Var}W_{n-k-1}}{A^2}(1+o(1))
\end{eqnarray*}
as $x\to\infty$ uniformly in $n\ge1$ and $k\le n-1$.
As calculated in \cite[Theorem 1.5.1]{H},
\begin{eqnarray*}
{\rm Var}W_n &=& \frac{\sigma^2 (1-m^{-n})}{m^2-m}
\uparrow\frac{\sigma^2}{m^2-m}={\rm Var}W
\quad\mbox{ as }n\to\infty,
\end{eqnarray*}
which completes the proof of \eqref{Wn.xi.squre.2}.
Substituting \eqref{Wn.xi.squre.1} and \eqref{Wn.xi.squre.2}
into \eqref{Wn.xi} we deduce the lower bound \eqref{lower.11}.

If $F$ is $\sqrt x$-insensitive, then letting $A\to\infty$
we conclude the second lower bound of the lemma.
\end{proof}

As clearly seen from the proof of Lemma \ref{l:lower.2},
in the case of Weibull distribution with parameter $\beta\in(1/2,1)$
the tail of $W_n$ is definitely heavier than $\overline F(mx)$.
Now let us explain why more accurate lower bound (\ref{Rem2})
given in Introduction holds. Recalling that
$$
W=\frac{1}{m}\sum_{i=1}^\xi W^{(i)},
$$
where $W^{(i)}$ are independent copies of $W$ which
don't depend on $\xi$, we derive
\begin{align}\label{Rem3}
\nonumber
\Pr\{W>x\}
&\ge \Pr\biggl\{\sum_{i=1}^\xi W^{(i)}>mx;\,\xi\geq N_x\biggr\}\\
&\ge \Pr\{\xi>N_x\}\Pr\biggl\{\sum_{i=1}^{N_x} W^{(i)}>mx\biggr\},
\end{align}
where $N_x:=[mx-z(mx)^\beta]$, $z>0$. It is easy to see that
\begin{align}\label{Rem4}
\nonumber
\Pr\{\xi>N_x\}&\sim e^{-(mx-z(mx)^\beta)^\beta}\\
&= e^{-(mx)^\beta+\beta z(mx)^{2\beta-1}+O(x^{3\beta-2})}.
\end{align}
In view of log-scaled asymptotics for $\Pr\{W>x\}$
(see the first assertion of Theorem \ref{thm:asy.uni.W}),
$\E e^{(1-\varepsilon)m^\beta W^\beta}<\infty$
for every $\varepsilon>0$.
Moreover, $x^{2\beta}\ll N_x(x^\beta)^\beta$.
Consequently, we may apply Nagaev's theorem
\cite[Theorem 3]{N1965}:
\begin{align}\label{Rem5}
\nonumber
\Pr\biggl\{\sum_{i=1}^{N_x} W^{(i)}>mx\biggr\}&\ge
\Pr\biggl\{\sum_{i=1}^{N_x} (W^{(i)}-1)>z(mx)^\beta\biggr\}\\
&=\exp\Bigl\{-\frac{z^2}{2\sigma^2}(mx)^{2\beta-1}(1+o(1))\Bigr\}.
\end{align}
Combining \eqref{Rem3}--\eqref{Rem5}, we get
$$
\Pr\{W>x\}\ge
\exp\bigl\{-(mx)^\beta+\bigl(\beta z-z^2/2\sigma^2\bigr)(mx)^{2\beta-1}(1+o(1))\bigr\}.
$$
Maximizing $\beta z-z^2/2\sigma^2$, we obtain \eqref{Rem2}.

\section{Upper bounds: a reduction to a finite time horizon}
\label{sec:upper}

\begin{lemma}\label{l:upper}
Let the distribution $F$ be dominated varying and satisfy
the condition \eqref{cond:mat}. Then, for every $\varepsilon>0$,
there exists an $N$ such that, for all $n>N$ and for all
sufficiently large $x$,
$$
\Pr\{W_n>x\} \le (1+\varepsilon)\Pr\{W_N>(1-\varepsilon)x\}.
$$
\end{lemma}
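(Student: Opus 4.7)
My plan is to establish a one-step reduction
\[
\Pr\{W_n>x\}\le\Pr\{W_{n-1}>(1-\eta)x\}+\frac{C(\eta)}{m^{n\delta}}\overline F(x)
\]
valid for every $\eta\in(0,1)$ and all sufficiently large $x$, and then to iterate it from level $n$ down to $N$ using a geometric schedule of $\eta$'s chosen so that the cumulative shrinkage and the cumulative error stay bounded uniformly in $n$.

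For the one-step bound I use the branching identity $Z_n=\sum_{i=1}^{Z_{n-1}}\xi_i^{(n-1)}$ and, conditioning on $Z_{n-1}$, split according to whether $Z_{n-1}>m^{n-1}(1-\eta)x$ or not. On the complementary event, with $Z_{n-1}=k\le m^{n-1}(1-\eta)x$, the sum $S_k:=\xi_1+\dots+\xi_k$ has to exceed $m^nx$, that is $S_k-km>\eta m^nx$. The centred variables $\xi_i-m$ are bounded below by $-m$ and, by \eqref{cond:mat}, have a finite $(1+\delta')$-moment for every $\delta'<\delta$; together with the dominated variation of $F$ (which transfers up to a constant to the shifted distribution) this puts them in the scope of Proposition~\ref{thm:upper.bound.for.sum.R}, which yields $\Pr\{S_k-km>\eta m^nx\}\le c_0 k\overline F(\eta m^nx)$ on the admissible range $k\le(\eta m^nx)^{1+\delta'}$ (which, for $x$ large, comfortably contains $k\le m^{n-1}x$). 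Summing, using $\E Z_{n-1}=m^{n-1}$, and invoking \eqref{cond:mat} once more to rewrite $\overline F(\eta m^nx)$ as $O(\overline F(x)/(\eta m^n)^{1+\delta})$, produces the one-step bound with $C(\eta)$ of order $\eta^{-(1+\delta)}$.

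To iterate, I fix $r\in(m^{-\delta/(1+\delta)},1)$ and take $\eta_j:=\eta_0 r^{j-N}$ with $\eta_0:=\varepsilon(1-r)/(2r)$, so that $\sum_{j>N}\eta_j\le\varepsilon/2$ and $\prod_{j>N}(1-\eta_j)\ge 1-\varepsilon$. Plugging the one-step estimate into itself $n-N$ times, and bounding each shrunken $\overline F(\alpha x)$ with $\alpha\ge 1-\varepsilon\ge 1/2$ by $K_\varepsilon\overline F(x)$ via dominated variation of $F$, gives
\[
\Pr\{W_n>x\}\le\Pr\{W_N>(1-\varepsilon)x\}+K_\varepsilon\overline F(x)\sum_{j=N+1}^{n}\frac{C(\eta_j)}{m^{j\delta}}.
\]
The inner sum is geometric with common ratio $1/(r^{1+\delta}m^\delta)<1$ and is therefore $O(m^{-N\delta})$ uniformly in $n$. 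Lemma~\ref{l:lower} (retaining only its $i=0$ term and invoking iterated dominated variation to compare $\overline F(m(1-\varepsilon)(1+\tilde\varepsilon)x)$ with $\overline F(x)$) supplies a lower bound $\Pr\{W_N>(1-\varepsilon)x\}\ge c_*\overline F(x)$ for large $x$, with $c_*>0$ not depending on $N$; choosing $N$ so large that $K_\varepsilon\cdot O(m^{-N\delta})\le\varepsilon c_*$ finishes the proof.

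The main delicate point is the calibration of the $\eta_j$'s: a constant schedule would destroy the product $\prod(1-\eta_j)$, whereas a geometric schedule with $r$ at or below the critical value $m^{-\delta/(1+\delta)}$ would make the error series $\sum C(\eta_j)/m^{j\delta}$ diverge with $n-N$, and the choice of $r$ strictly above $m^{-\delta/(1+\delta)}$ is precisely what simultaneously controls both quantities.
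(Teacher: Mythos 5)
Your proposal follows the same strategy as the paper's proof: reduce one generation at a time via Proposition~\ref{thm:upper.bound.for.sum.R}, iterate from $n$ down to $N$, and control both the cumulative shrinkage of the threshold and the cumulative error term, finally comparing the error against $\Pr\{W_N>(1-\varepsilon)x\}$ through Lemma~\ref{l:lower}. The only genuine difference is the shrinkage schedule: you use a geometric one, $\eta_j=\eta_0 r^{j-N}$ with $r\in\bigl(m^{-\delta/(1+\delta)},1\bigr)$, whereas the paper uses the polynomial choice $x_k=x/k^2$. Both work, and your calibration of $r$ against the geometric error ratio $r^{-(1+\delta)}/m^\delta$ is correct.

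One point you gloss over deserves a word: the admissible range from Proposition~\ref{thm:upper.bound.for.sum.R} at step $j$ requires $m^{j-1}y_j\le(\eta_j m^j y_j)^{1+\delta'}$ for some $\delta'<\delta$. Because $\eta_j$ shrinks geometrically in $j$, the threshold on $x$ implied by this inequality scales like $m^{-N}\bigl(r^{-(1+\delta')/\delta'}m^{-1}\bigr)^{j-N}$, which stays bounded in $j$ only when $r\ge m^{-\delta'/(1+\delta')}$. Your condition $r>m^{-\delta/(1+\delta)}$ guarantees this only after you pick $\delta'$ close enough to $\delta$; with an arbitrary $\delta'<\delta$ (say $\delta'$ very small) the range condition would fail for large $j$ at fixed $x$, destroying uniformity in $n$. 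So the claim that the range ``comfortably contains $k\le m^{n-1}x$ for $x$ large'' needs the explicit coupling $r>m^{-\delta'/(1+\delta')}$. The paper's choice $x_k=x/k^2$ sidesteps this entirely, since there the admissible-range condition $x-x_k\le m^{(k-1)\delta/2}x_k^{1+\delta/2}$ reduces to $m^{(k-1)\delta/2}\ge k^{2+\delta}$, a constraint on $N$ alone, with no dependence on $x$. This is the one place where the geometric schedule is more delicate than the polynomial one.
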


\begin{proof}
In order to derive this upper bound we write, for $z<y$,
\begin{eqnarray}\label{est.up}
\Pr\Bigl\{\sum_{i=1}^{Z_{n-1}}\xi_i>my\Bigr\}
&\le& \Pr\{Z_{n-1}>z\}
+\Pr\Bigl\{\sum_{i=1}^{Z_{n-1}}\xi_i>my;Z_{n-1}\le z\Bigr\},
\end{eqnarray}
where the $\xi$'s are independent of $Z_{n-1}$.
It follows from Proposition \ref{thm:upper.bound.for.sum.R}
(under the condition \eqref{sandw.1}) for sums with zero mean,
$\eta_i=\xi_i-m$, that, for some $c<\infty$,
\begin{eqnarray*}
\Pr\Bigl\{\sum_{i=1}^k \xi_i>my\Bigr\} &=&
\Pr\Bigl\{\sum_{i=1}^k (\xi_i-m)>m(y-k)\Bigr\}\\
&\le& ck\overline F(m(y-k))\quad\mbox{ for all }k\le z,
\end{eqnarray*}
provided $z\le (y-z)^{1+\delta/2}$. Therefore,
\begin{eqnarray}\label{for.the}
\Pr\Bigl\{\sum_{i=1}^{Z_{n-1}}\xi_i>my;Z_{n-1}\le z\Bigr\}
&=& \sum_{k=1}^z \Pr\{Z_{n-1}=k\}\Pr\Bigl\{\sum_{i=1}^k \xi_i>my\Bigr\}\nonumber\\
&\le& c\sum_{k=1}^z \Pr\{Z_{n-1}=k\}k\overline F(m(y-k))\\
&\le& c\E Z_{n-1}\overline F(m(y-z))\nonumber\\
&=& cm^{n-1}\overline F(m(y-z)).\nonumber
\end{eqnarray}
Substituting this into \eqref{est.up} with $y=m^{n-1}x$
and $z=m^{n-1}(x-x_n)$ we obtain
\begin{eqnarray*}
\Pr\{W_n>x\} &\le& \Pr\{W_{n-1}>x-x_n\}+cm^{n-1}\overline F(m^n x_n),
\end{eqnarray*}
provided $x-x_n\le m^{(n-1)\delta/2}x_n^{1+\delta/2}$.
Iterating this upper bound $n-N$ times, we arrive at
the following inequality:
\begin{eqnarray}\label{th.N.pre}
\Pr\{W_n>x\} &\le& \Pr\{W_N>x-x_n-\ldots-x_{N+1}\}
+c\sum_{k=N+1}^n m^{k-1}\overline F(m^k x_k),
\end{eqnarray}
provided $x\le m^{(k-1)\delta/2}x_k^{1+\delta/2}$ for all $k$.
Take decreasing sequence $x_k=x/k^2$.
Choose $N$ so large that $m^{(k-1)\delta/2}\ge k^{2+\delta}$
for all $k\ge N+1$.
Then \eqref{th.N.pre} holds for all $n\ge N+1$ and we have
\begin{eqnarray*}
\Pr\{W_n>x\} &\le& \Pr\{W_N>(1-1/(N+1)^2-\ldots-1/n^2)x\}
+c\sum_{k=N+1}^n m^{k-1}\overline F(m^k x/k^2).
\end{eqnarray*}
Choose $N$ so large that additionally
$\sum_{k=N}^\infty 1/k^2\le \varepsilon$. Then
\begin{eqnarray*}
\Pr\{W_n>x\} &\le& \Pr\{W_N>(1-\varepsilon)x\}
+c\sum_{k=N+1}^n m^{k-1}\overline F(m^k x/k^2).
\end{eqnarray*}
Owing to the condition \eqref{cond:mat},
\begin{eqnarray*}
\sum_{k=N+1}^n m^{k-1}\overline F(m^k x/k^2)
&\le& c_1\overline F(mx)\sum_{k=N+1}^\infty
\frac{m^{k-1}}{(m^{k-1}/k^2)^{1+\delta}}.
\end{eqnarray*}
Now we may increase $N$ so that
\begin{eqnarray*}
c\sum_{k=N+1}^n m^{k-1}\overline F(m^k x/k^2)
&\le& \varepsilon\overline F(mx)/3,
\end{eqnarray*}
which implies
\begin{eqnarray*}
\Pr\{W_n>x\} &\le& \Pr\{W_N>(1-\varepsilon)x\}
+\varepsilon\overline F(mx)/2.
\end{eqnarray*}
Applying here Lemma \ref{l:lower}, we deduce
$\overline F(mx)\le(1+o(1))\Pr\{W_N>(1-\varepsilon)x\}$
as $x\to\infty$, so
\begin{eqnarray*}
\Pr\{W_n>x\} &\le& (1+\varepsilon)\Pr\{W_N>(1-\varepsilon)x\}
\end{eqnarray*}
for all sufficiently large $x$, and the proof is complete.
\end{proof}

The calculations above imply the following

\begin{corollary}\label{cor:upper.D}
Let the distribution $F$ be dominated varying and satisfy
the condition \eqref{cond:mat}. Then there exists a constant
$c<\infty$ such that $\Pr\{W_n>x\} \le c\overline F(x)$
for all $n$ and $x$.
\end{corollary}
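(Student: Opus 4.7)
The plan is to iterate the one-step inequality extracted in the proof of Lemma \ref{l:upper} all the way down to $n=1$, where the tail of $W_1=\xi/m$ is explicit. Concretely, the calculation there (via Proposition \ref{thm:upper.bound.for.sum.R} under \eqref{sandw.1}) produces
$$
\Pr\{W_n>x\}\le \Pr\{W_{n-1}>x-x_n\}+cm^{n-1}\overline F(m^nx_n),
$$
valid whenever $x-x_n\le m^{(n-1)\delta/2}x_n^{1+\delta/2}$. I would choose the reduction schedule $x_k:=x/(2k^2)$, so that $\sum_{k=2}^{\infty}x_k<x/2$. A direct check shows the constraint then becomes $x^{\delta/2}\ge\sup_{k\ge 2}(1-1/(2k^2))(2k^2)^{1+\delta/2}/m^{(k-1)\delta/2}$, and this supremum is finite since $m>1$. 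Hence there exists $x_0=x_0(m,\delta)$ such that, for $x\ge x_0$, the constraint is met for every $k\ge 2$ simultaneously.

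Iterating the one-step inequality from $n$ down to $1$ then yields, for all $n$ and all $x\ge x_0$,
$$
\Pr\{W_n>x\}\le \Pr\{W_1>x/2\}+c\sum_{k=2}^n m^{k-1}\overline F\bigl(m^kx/(2k^2)\bigr).
$$
The first term equals $\overline F(mx/2)$, which is bounded by a constant multiple of $\overline F(x)$ via dominated variation applied a bounded number of times (since $mx/2\ge x/2$ for $m>1$). For the series, condition \eqref{cond:mat} applied to each $k$ with $m^k/(2k^2)>1$ gives $m^{k-1}\overline F(m^kx/(2k^2))\le c'\overline F(x)\,k^{2(1+\delta)}m^{-k\delta-1}$, and $\sum_k k^{2(1+\delta)}m^{-k\delta}$ converges. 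The finitely many indices $k$ with $m^k/(2k^2)\le 1$ are handled by dominated variation once more. Summing the bounds yields $\Pr\{W_n>x\}\le C\overline F(x)$ uniformly in $n$ for $x\ge x_0$.

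For the residual range $x<x_0$, I would use the trivial bound $\Pr\{W_n>x\}\le 1\le \overline F(x_0)^{-1}\overline F(x)$, valid since $\overline F$ is monotone and $\overline F(x_0)>0$. The main technical point is uniformly verifying the iteration constraint across all $k\ge 2$; the natural choice $x_k=x/k^2$ used in Lemma \ref{l:upper} breaks here because its partial sums can exceed $x$, so the iteration cannot be driven all the way to $n=1$. Halving to $x_k=x/(2k^2)$ is the modification that makes the plan go through and is otherwise routine.
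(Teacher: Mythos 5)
Your proof is correct and takes essentially the same route as the paper, which simply points to the iteration scheme in the proof of Lemma~\ref{l:upper} and expects it to be run all the way down to $n=1$ (restricting to $x\ge x_0$ and handling the small-$x$ range trivially, exactly as you do). One factual slip in your last paragraph: the unhalved schedule $x_k=x/k^2$ would serve equally well, since the iteration only consumes $x_2,\dots,x_n$ and $\sum_{k\ge2}k^{-2}=\pi^2/6-1<1$, so the partial sums cannot exceed $x$; the restriction to large $x$ is forced not by the size of the sum but by the range constraint $x^{-\delta/2}k^{2+\delta}\le m^{(k-1)\delta/2}$, which for small $k$ requires $x\ge x_0(m,\delta)$ no matter how one normalises the $x_k$.
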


For dominated varying distributions it is possible to obtain
more accurate bound which will be of use for wider class of
distributions than intermediate regularly varying.
We do it in the next lemma where the bound provided by
the previous corollary serves as the first step preliminary bound.

\begin{lemma}\label{l:upper.sq.N}
Let $\E\xi^2<\infty$, the distribution $F$ be dominated varying
and satisfy the condition \eqref{cond:mat}. Then, for every
$\gamma>1/2$ and $\varepsilon>0$, there exists an $N$ such that,
for all $n>N$ and for all sufficiently large $x$,
$$
\Pr\{W_n>x\} \le (1+\varepsilon)\Pr\{W_N>x-x^\gamma\}.
$$
\end{lemma}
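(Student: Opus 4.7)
My plan is to follow the telescoping argument of Lemma \ref{l:upper}, iterating the one-step inequality
$$\Pr\{W_n>x\}\le\Pr\{W_{n-1}>x-x_n\}+cm^{n-1}\overline F(m^nx_n),$$
which is obtained (as in \eqref{est.up}--\eqref{for.the}) by splitting on $\{Z_{n-1}\le m^{n-1}(x-x_n)\}$ and applying Proposition~\ref{thm:upper.bound.for.sum.R} to the centred increments $\eta_i=\xi_i-m$. To get total decrement $\sum_{k>N}x_k\le x^\gamma$ instead of $\varepsilon x$, I need to take $x_k$ much smaller than $x/k^2$, and correspondingly to extend the range of validity of the one-step estimate.

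The hypothesis $\E\xi^2<\infty$ delivers this extension. A straightforward variant of the proof of Proposition~\ref{thm:upper.bound.for.sum.R}, in which the Chebyshev bound $\overline G(1/\lambda)\le\E\eta^2\lambda^2$ replaces the $(2+\delta)$-moment inequality and the factor $e^{\lambda y}\le x^{O(\varepsilon)}$ coming from dominated variation ($R(x)\le C\log x$) is absorbed into the quadratic term, shows that $\Pr\{T_k>z\}\le ck\overline G(z)$ holds for all $k\le z^{2-\epsilon}/(c\log z)$ and any preassigned $\epsilon>0$. Hence the one-step inequality above is available whenever $m^{n-1}(x-x_n)\le c^{-1}(m^nx_n)^{2-\epsilon}/\log(m^nx_n)$. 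I set $x_k=x^\gamma/k^2$; since $\gamma>1/2$ I can fix $\epsilon$ so small that $\gamma(2-\epsilon)>1$, and the validity condition then reads $x^{1-\gamma(2-\epsilon)}\le m^{(1-\epsilon)n+1}/(cn^{2(2-\epsilon)}\log)$, whose left-hand side tends to $0$ as $x\to\infty$, so it is met for every fixed $n\ge N+1$ and $x$ large. Taking $N$ large enough to ensure $\sum_{k>N}x^\gamma/k^2\le x^\gamma$ and iterating gives
$$\Pr\{W_n>x\}\le\Pr\{W_N>x-x^\gamma\}+c\sum_{k=N+1}^n m^{k-1}\overline F(m^kx_k).$$

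The hardest step is to show the error sum is at most $\varepsilon\Pr\{W_N>x-x^\gamma\}$. For indices $k\ge k^\star\sim(1-\gamma)\log x/\log m$ one has $m^{k-1}x_k/x>1$, so \eqref{cond:mat} gives $m^{k-1}\overline F(m^kx_k)\le c\overline F(mx)k^{2(1+\delta)}x^{(1-\gamma)(1+\delta)}/m^{(k-1)\delta}$, a geometrically convergent tail in $k$ that can be made arbitrarily small compared to $\overline F(mx)$ by enlarging $N$. For $k<k^\star$ one uses the finer bound $\sum_kk\Pr\{Z_{n-1}=k\}\overline F(m^nx-mk)$ in place of the cruder $m^{n-1}\overline F(m^nx_n)$, separating typical $Z_{n-1}\le m^{n-1}x/2$ (whose contribution reduces via \eqref{cond:mat} to a tail of a convergent series in $n$) from atypical $Z_{n-1}>m^{n-1}x/2$ (controlled using $\Pr\{W_{n-1}>x/2\}\le c\overline F(x)$ from Corollary~\ref{cor:upper.D}). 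Finally, Lemma~\ref{l:lower} applied to $W_N$ at $x-x^\gamma$, together with dominated variation, yields $\overline F(mx)\le c\Pr\{W_N>x-x^\gamma\}$, which converts the error bound into $\varepsilon\Pr\{W_N>x-x^\gamma\}$ and completes the proof.
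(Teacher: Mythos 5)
Your telescoping framework is right, but the error estimate has a gap that the paper's argument is specifically built to avoid. After iterating the crude one-step bound $\Pr\{W_n>x\}\le\Pr\{W_{n-1}>x-x_n\}+cm^{n-1}\overline F(m^n x_n)$ with $x_k=x^\gamma/k^2$, you must show $\sum_{k>N}m^{k-1}\overline F(m^k x^\gamma/k^2)=o(\overline F(mx))$ for a \emph{fixed} $N$. For indices $k\ge k^\star$, where $k^\star$ is defined by $m^{k^\star-1}x^{\gamma-1}/(k^\star)^2\approx 1$, you invoke \eqref{cond:mat} and obtain $m^{k-1}\overline F(m^k x_k)\le c\overline F(mx)\,k^{2(1+\delta)}x^{(1-\gamma)(1+\delta)}m^{-(k-1)\delta}$. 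This is correct, but the resulting tail is not small: the summand at $k=k^\star$ is $m^{k^\star-1}\overline F(m^{k^\star}x_{k^\star})\approx(k^\star)^2x^{1-\gamma}\overline F(mx)$, since $m^{k^\star}x_{k^\star}\approx mx$ there, and $x^{1-\gamma}\to\infty$ because $\gamma<1$. Since $k^\star\to\infty$ with $x$, no fixed $N$ ever places all iteration indices past this bad region — enlarging $N$ does not help, and the sum over $k\ge k^\star$ grows without bound relative to $\overline F(mx)$. The crude error term $m^{n-1}\overline F(m^n x_n)$ is simply too large when the decrements $x_n$ are of order $x^\gamma$ rather than $x$.

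The paper's proof never reduces the one-step error to $m^{n-1}\overline F(m^n x_n)$. It keeps the full expression $\sum_j j\Pr\{Z_{n-1}=j\}\overline F(m^n x-mj)$ and splits at $j=z/2$, $z=m^{n-1}(x-x_n)$, for \emph{every} iteration index. The $j\le z/2$ piece is bounded by $c_1 m^{n-1}\overline F(m^n x)$ — crucially with argument $m^n x$, not $m^n x_n$ — which \eqref{cond:mat} turns into a geometric tail. The $j>z/2$ piece carries two small factors at once: $\Pr\{W_{n-1}>x/2\}\le c\overline F(x)$ from Corollary~\ref{cor:upper.D}, \emph{and} $m^{n-1}x\overline F(m^n x_n)$, and this is where $\E\xi^2<\infty$ enters decisively: by Chebyshev, $x\overline F(x^\gamma)\le\E\xi^2\,x^{1-2\gamma}\to0$ precisely because $\gamma>1/2$. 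You gesture at this decomposition in your final paragraph, but you deploy it only for $k<k^\star$ (whereas the problem lies at and above $k^\star$), and you never write down the Chebyshev step $x\overline F(x^\gamma)=O(x^{1-2\gamma})$, which is the one place the finite second moment is actually used. Applying the refined two-sum decomposition to all indices, as the paper does, yields a uniformly valid three-term one-step inequality and hence a genuinely fixed $N$; your two-regime patch does not.
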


\begin{proof}
Here we need more accurate upper bounds
based on \eqref{for.the}. Take $\delta\in(1/\gamma-1,1)$.
First note that, as follows from Proposition
\ref{thm:upper.bound.for.sum.R} under the condition
\eqref{sandw.1} (which is fulfilled because
$\E\xi^2<\infty$), the bound \eqref{for.the} now holds within a
larger time range where $z\le (y-z)^{1+\delta}$. For those $z$,
\begin{eqnarray*}
\Pr\Bigl\{\sum_{i=1}^{Z_{n-1}}\xi_i>my;Z_{n-1}\le z\Bigr\}
&\le& c\biggl(\sum_{k=1}^{z/2}+\sum_{k=z/2}^z\biggr)
\Pr\{Z_{n-1}=k\}k\overline F(m(y-k))\\
&=:& c(\Sigma_1+\Sigma_2).
\end{eqnarray*}
We have
\begin{eqnarray*}
\Sigma_1 &\le& \overline F(m(y-z/2))\sum_{k=1}^{y/2}\Pr\{Z_{n-1}=k\}k\\
&\le& \E Z_{n-1}\overline F(my/2)
\le c_1m^{n-1}\overline F(my),
\end{eqnarray*}
for some $c_1<\infty$, by dominated variation of $F$. Further,
\begin{eqnarray*}
\Sigma_2 &\le& \Pr\{Z_{n-1}>z/2\}z\overline F(m(y-z))\\
&\le& c_2\overline F(z/2m^{n-1})z\overline F(m(y-z))\\
&\le& c_2c_1\overline F(z/m^{n-1})z\overline F(m(y-z)),
\end{eqnarray*}
by Corollary \ref{cor:upper.D} and dominated variation of $F$.
Collecting bounds for $\Sigma_1$ and $\Sigma_2$ with $y=m^{n-1}x$
and $z=m^{n-1}(x-x_n)$, we obtain from \eqref{est.up} that
\begin{eqnarray*}
\Pr\{W_n>x\} &\le& \Pr\{W_{n-1}>x-x_n\}
+c_1m^{n-1}\overline F(m^nx)
+c_3\overline F(x-x_n)m^{n-1}x\overline F(m^nx_n)
\end{eqnarray*}
provided $x-x_n\le m^{(n-1)\delta}x_n^{1+\delta}$.
Iterating this upper bound $n-N$ times, we arrive at
the following inequality:
\begin{eqnarray}\label{th.N.pre.2}
\Pr\{W_n>x\} &\le& \Pr\{W_N>x-x_n-\ldots-x_{N+1}\}
+c_1\sum_{k=N+1}^n m^{k-1}\overline F(m^k(x-x_n-\ldots-x_{k+1}))\nonumber\\
&& +c_3\sum_{k=N+1}^n \overline F(x-x_n-\ldots-x_k)
m^{k-1}x\overline F(m^kx_k),
\end{eqnarray}
provided $x\le m^{(k-1)\delta}x_k^{1+\delta}$
for all $k=n$, \ldots, $N+1$.

Now take decreasing sequence $x_k=x^\gamma/k^2$.
Since $\gamma>1/2$ and $\delta\in(1/\gamma-1,1)$,
$x^{\gamma(1+\delta)}>x$.
Then \eqref{th.N.pre.2} holds for every $n\ge N+1$ and we have
\begin{eqnarray*}
\Pr\{W_n>x\} &\le& \Pr\{W_N>x-(1/(N+1)^2+\ldots+1/n^2)x^\gamma\}\\
&&+c_1\sum_{k=N+1}^n m^{k-1}
\overline F(m^k(x-(1/n^2+\ldots+1/(k+1)^2)x^\gamma))\\
&& +c_3\sum_{k=N+1}^n
\overline F(x-(1/n^2+\ldots+1/k^2)x^\gamma)
m^{k-1}x\overline F(m^kx^\gamma/k^2).
\end{eqnarray*}
Choose $N$ so large that $\sum_{k=N+1}^\infty 1/k^2\le 1$. Then
\begin{eqnarray*}
\Pr\{W_n>x\} &\le& \Pr\{W_N>x-x^\gamma\}
+c_1\overline F(x-x^\gamma)\sum_{k=N+1}^n m^{k-1}
\frac{\overline F(m^k(x-x^\gamma))}{\overline F(x-x^\gamma)}\\
&&\hspace{40mm} +c_3\overline F(x-x^\gamma)
\sum_{k=N+1}^n m^{k-1}x\overline F(m^kx^\gamma/k^2).
\end{eqnarray*}
Owing to the condition \eqref{cond:mat},
\begin{eqnarray*}
\sum_{k=N+1}^n m^{k-1}
\frac{\overline F(m^k(x-x^\gamma))}{\overline F(x-x^\gamma)}
&\le& c_4\sum_{k=N+1}^\infty
\frac{m^{k-1}}{m^{k(1+\delta)}}\to 0 \quad\mbox{as }N\to\infty
\end{eqnarray*}
and
\begin{eqnarray*}
\sum_{k=N+1}^n m^{k-1}x\overline F(m^kx^\gamma/k^2)
&\le& c_4 x\overline F(x^\gamma)
\sum_{k=N+1}^n \frac{m^{k-1}}{(m^k/k^2)^{1+\delta}}\\
&\le& c_4\E\xi^2 x^{1-2\gamma}\sum_{k=N+1}^\infty
\frac{m^{k-1}}{(m^k/k^2)^{1+\delta}}
\to 0 \quad\mbox{as }N\to\infty.
\end{eqnarray*}
Taking into account that
$\overline F(x-x^\gamma)\le c_5\overline F(mx)$
and further increasing $N$ we derive the following bound:
\begin{eqnarray*}
\Pr\{W_n>x\} &\le& \Pr\{W_N>x-x^\gamma\}
+\varepsilon\overline F(mx)/2.
\end{eqnarray*}
Applying here Lemma \ref{l:lower}, we deduce
$\overline F(mx)\le(1+o(1))\Pr\{W_N>x\}$ as $x\to\infty$, so
\begin{eqnarray*}
\Pr\{W_n>x\} &\le& (1+\varepsilon)\Pr\{W_N>x-x^\gamma\}
\end{eqnarray*}
for all sufficiently large $x$, and the proof is complete.
\end{proof}

Note that the assertion of Lemma \ref{l:upper} holds not only
for intermediate regularly varying distributions but for
Weibull distributions as well;
more precisely, the following result holds.

\begin{lemma}\label{l:upper.W}
Let $\overline F(x)=e^{-R(x)}$ where $R(x)$ satisfies
the condition \eqref{cond.non.increasing} and $R(x)/x\to0$.
Let the condition \eqref{cond:mat} hold. Then, for every
$\varepsilon>0$, there exists an $N$ such that
$$
\Pr\{W_n>x\} \le (1+\varepsilon)\Pr\{W_N>(1-\varepsilon)x\}
$$
for all $n>N$ and for all sufficiently large $x$.
\end{lemma}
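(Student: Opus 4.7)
The proof follows the same outline as Lemma \ref{l:upper}, with the essential change being that I would substitute Proposition \ref{thm:upper.bound.for.sum.W} in place of Proposition \ref{thm:upper.bound.for.sum.R} when bounding truncated sums. As a first step, I would split
\[
\Pr\Bigl\{\sum_{i=1}^{Z_{n-1}}\xi_i>my\Bigr\}
\le \Pr\{Z_{n-1}>z\}+\sum_{k=1}^z\Pr\{Z_{n-1}=k\}\Pr\Bigl\{\sum_{i=1}^k\xi_i>my\Bigr\}.
\]
Writing the inner event as $\{\sum_{i=1}^k(\xi_i-m)>m(y-k)\}$, I would apply Proposition \ref{thm:upper.bound.for.sum.W} to the centred summands $\eta_i=\xi_i-m$ with truncation level $y_0:=(1-\varepsilon')m(y-k)$, obtaining
\[
\Pr\Bigl\{\sum_{i=1}^k\xi_i>my\Bigr\}\le (k+1)\overline F\bigl((1-\varepsilon')m(y-k)\bigr),
\]
as long as the compatibility condition $kR(y_0)/(m(y-k))^2\le 1/c(\varepsilon')$ holds. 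Here I use that the shifted distribution $G$ of $\eta_1$ inherits \eqref{cond.non.increasing} and all finite moments from the Weibull-type $F$, and that $\overline G(x)=\overline F(x+m)\le\overline F(x)$. Setting $y=m^{n-1}x$, $z=m^{n-1}(x-x_n)$ and using $\E Z_{n-1}=m^{n-1}$, the summation over $k\le z$ yields the one-step recursion
\[
\Pr\{W_n>x\}\le \Pr\{W_{n-1}>x-x_n\}+c_1m^{n-1}\overline F\bigl((1-\varepsilon')m^nx_n\bigr).
\]

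Next I would iterate this bound down to a fixed level $N$, choosing $x_k=x/k^2$ and taking $N$ so large that $\sum_{k>N}1/k^2<\varepsilon$. This produces
\[
\Pr\{W_n>x\}\le \Pr\{W_N>(1-\varepsilon)x\}+c_1\sum_{k=N+1}^n m^{k-1}\overline F\bigl((1-\varepsilon')m^kx/k^2\bigr).
\]
The residual sum is controlled by the tail condition \eqref{cond:mat}: each summand is at most a constant multiple of $k^{2(1+\delta)}m^{-(k-1)\delta}\overline F(mx)$, so the whole sum is bounded by $c_2\overline F(mx)\sum_{k>N}k^{2(1+\delta)}m^{-(k-1)\delta}$, which can be made smaller than $\varepsilon\overline F(mx)/2$ by enlarging $N$. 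Finally, Lemma \ref{l:lower} supplies $\overline F(mx)\le(1+o(1))\Pr\{W_N>(1-\varepsilon)x\}$ as $x\to\infty$, which absorbs the residual term into a $(1+\varepsilon)$ prefactor and gives the desired inequality.

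The principal obstacle is verifying that the compatibility condition $kR(y_0)/(m(y-k))^2\le 1/c(\varepsilon')$ holds uniformly along the entire iteration chain $k=N+1,\ldots,n$ and for all sufficiently large $x$. Unwinding the definitions, this reduces to showing that $k^4R\bigl((1-\varepsilon')m^kx/k^2\bigr)/(m^{k+1}x)$ is small for $k\ge N$ and large $x$. Here \eqref{cond.non.increasing} together with $R(x)/x\to0$ forces $R$ to grow strictly sublinearly, so the factor $R(\cdot)/x$ is bounded by $o(1)\cdot m^{k}/k^{2}$ times a constant, and after cancellation the expression is of order $o(1)\cdot k^{2}m^{-1}$, which is not automatically small in $k$. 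This is the only delicate point: one must choose $x_k=x/k^2$ carefully (a slower decay would break the summability in the residual step, a faster one would break compatibility) and exploit the fact that for any fixed $k$ the quantity $R(m^kx/k^2)/x\to0$ as $x\to\infty$, so one first fixes $N$ to kill the residual, then enlarges the threshold on $x$ so that compatibility holds for all $k\ge N+1$.
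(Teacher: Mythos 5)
Your proposal follows the paper's proof essentially verbatim: the same decomposition via \eqref{est.up}, the same substitution of Proposition \ref{thm:upper.bound.for.sum.W} for Proposition \ref{thm:upper.bound.for.sum.R} applied to the centred summands $\eta_i=\xi_i-m$, the same iteration with $x_k=x/k^2$, the same control of the residual sum via \eqref{cond:mat}, and the same final absorption step via Lemma \ref{l:lower}. The delicate compatibility check along the iteration that you single out is indeed the only subtle step, and the paper resolves it with the same appeal to $R(z)/z\to0$ that you sketch; the small bookkeeping slip of writing $\overline F(mx)$ rather than $\overline F((1-\varepsilon')mx)$ in the residual bound is inconsequential once $\varepsilon'$ is taken smaller than $\varepsilon$.
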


\begin{proof}[Proof\/ {\rm is similar to that of Lemma \ref{l:upper}}]
Start again with the inequality \eqref{est.up}.
As follows from Proposition \ref{thm:upper.bound.for.sum.W}
for sums with zero mean, $\eta_i=\xi_i-m$, that,
for some $c<\infty$,
\begin{eqnarray*}
\Pr\Bigl\{\sum_{i=1}^k \xi_i>my\Bigr\} &=&
\Pr\Bigl\{\sum_{i=1}^k (\xi_i-m)>m(y-k)\Bigr\}\\
&\le& ck\overline F((m-\varepsilon/2)(y-k))
\quad\mbox{ for all }k\le z,
\end{eqnarray*}
provided $z\le\frac{(y-z)^2}{cR(y-z)}$. Therefore,
\begin{eqnarray*}
\Pr\Bigl\{\sum_{i=1}^{Z_{n-1}}\xi_i>my;Z_{n-1}\le z\Bigr\}
&\le& c\E Z_{n-1}\overline F((m-\varepsilon/2)(y-z))\\
&=& cm^{n-1}\overline F((m-\varepsilon/2)(y-z)).
\end{eqnarray*}
Substituting this into \eqref{est.up} with $y=m^{n-1}x$
and $z=m^{n-1}(x-x_n)$ we obtain
\begin{eqnarray*}
\Pr\{W_n>x\} &\le& \Pr\{W_{n-1}>x-x_n\}
+cm^{n-1}\overline F((m-\varepsilon/2)^n x_n),
\end{eqnarray*}
provided $x-x_n\le \frac{m^{n-1}x_n^2}{cR(m^{n-1}x_n)}$.
Iterating this upper bound $n-N$ times, we arrive at
the following inequality:
\begin{eqnarray}\label{th.N.pre*}
\Pr\{W_n>x\} &\le& \Pr\{W_N>x-x_n-\ldots-x_{N+1}\}
+c\sum_{k=N+1}^n m^{k-1}\overline F((m-\varepsilon/2)^k x_k),
\end{eqnarray}
provided $x-x_k\le \frac{m^{k-1}x_k^2}{cR(m^{k-1}x_k)}$
for all $k=n$, \ldots, $N+1$.
Take decreasing sequence $x_k=x/k^2$. Choose $N$ so large that
$\frac{m^{k-1}x}{k^2}\ge R(m^{k-1}x/k^2)$ for all $k\ge N+1$;
it is possible because $R(z)/z\to 0$ as $z\to\infty$.
Then \eqref{th.N.pre*} holds for every $n\ge N+1$ and we have
\begin{eqnarray*}
\Pr\{W_n>x\} &\le& \Pr\{W_N>(1-1/(N+1)^2-\ldots-1/n^2)x\}
+c\sum_{k=N+1}^n m^{k-1}\overline F((m-\varepsilon/2)^kx/k^2).
\end{eqnarray*}
Choose $\varepsilon>0$ so small to satisfy
$m<(m-\varepsilon/2)^{1+\delta}$ where $\delta>0$
is taken from the condition \eqref{cond:mat}.
Then the rest of the proof is the same as the proof of
Lemma \ref{l:upper}.
\end{proof}

\section{Finite time horizon asymptotics}
\label{sec:time}

As follows from \cite[Section 6]{DFK} for intermediate
regularly varying distribution $F$, for every fixed $n$,
\begin{eqnarray}\label{thm:DFK}
\Pr\{W_n>x\} &\sim& \sum_{i=0}^{n-1} m^i \overline F(m^{i+1}x)
\quad\mbox{ as }x\to\infty.
\end{eqnarray}
For the case where the second moment of $\xi$ is finite,
we extend this result for a wider class of distributions as follows.

\begin{lemma}\label{l:upper.sq}
Let $\E\xi^2<\infty$ and the distribution $F$ be dominated varying.
If $F$ is $x^\gamma$-insensitive for some $\gamma>1/2$,
then the equivalence \eqref{thm:DFK} holds for every fixed $n$.
\end{lemma}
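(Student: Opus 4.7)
I would prove the lemma by induction on $n$, the base case $n=1$ being immediate since $\Pr\{W_1>x\}=\overline F(mx)$ matches the single term on the right. Since $x^\gamma$-insensitivity with $\gamma>1/2$ implies $\sqrt{x}$-insensitivity (any shift of order $\sqrt{x}$ is dominated by one of order $x^\gamma$, and $\overline F$ is monotone), Lemma \ref{l:lower.2} supplies the matching lower bound
$$\Pr\{W_n>x\}\ge(1+o(1))\sum_{i=0}^{n-1}m^i\overline F(m^{i+1}x),$$
so it suffices to establish the matching upper bound in the inductive step.

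For the inductive step, writing $Z_n=\sum_{i=1}^{Z_{n-1}}\xi_i^{(n-1)}$ with the summands independent of $Z_{n-1}$, I would decompose
$$\Pr\{W_n>x\}\le\Pr\{W_{n-1}>x-x^\gamma\}+\sum_{k\le m^{n-1}(x-x^\gamma)}\Pr\{Z_{n-1}=k\}\,\Pr\{S_k>m^nx\},$$
where $S_k=\xi_1+\cdots+\xi_k$. The first term, by the inductive hypothesis and $x^\gamma$-insensitivity applied termwise to $\overline F(m^{i+1}(x-x^\gamma))$ for each fixed $i\le n-2$ (using that $m^{i+1}x^\gamma=O(x^\gamma)$ and that $O(x^\gamma)$-shifts are absorbed by iterating $x^\gamma$-insensitivity), is asymptotic to $\sum_{i=0}^{n-2}m^i\overline F(m^{i+1}x)$.

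For the second term I would split at a large fixed threshold $K$. For $k\le K$, I use the classical subexponential asymptotic $\Pr\{S_k>y\}\sim k\overline F(y)$ as $y\to\infty$ for each fixed $k$, which is available since $F$, being long-tailed and dominated varying, lies in $\SSR^*$ (as noted in the introduction). This yields
$$\sum_{k\le K}\Pr\{Z_{n-1}=k\}\Pr\{S_k>m^nx\}\sim \overline F(m^nx)\sum_{k\le K}k\Pr\{Z_{n-1}=k\},$$
and as $K\to\infty$ the last sum tends to $\E Z_{n-1}=m^{n-1}$, supplying the missing $i=n-1$ term. For $K<k\le m^{n-1}(x-x^\gamma)$, Proposition \ref{thm:upper.bound.for.sum.R} applied to $S_k-km$ (with some $\delta'\in(1/\gamma-1,1)$, which is nonempty because $\gamma>1/2$ and \eqref{sandw.1} holds by $\E\xi^2<\infty$) gives $\Pr\{S_k>m^nx\}\le ck\overline F(m^nx-mk)$, and it remains to show the resulting contribution is $o(\overline F(m^nx))$.

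\textbf{Main obstacle.} The delicate step is the ``moderate $k$'' contribution. For $k=O(x^\gamma)$ we have $\overline F(m^nx-mk)\sim\overline F(m^nx)$ by $x^\gamma$-insensitivity, and the prefactor $\sum_{k>K}k\Pr\{Z_{n-1}=k\}$ can be made arbitrarily small by choosing $K$ large (using $\E Z_{n-1}<\infty$). For larger $k$ up to $m^{n-1}(x-x^\gamma)$ the bound on $\overline F(m^nx-mk)$ degrades, and one must simultaneously exploit the smallness of $\Pr\{Z_{n-1}=k\}$: I would bound the latter by $\Pr\{Z_{n-1}\ge k\}\le c\overline F(k/m^{n-1})$ via Corollary \ref{cor:upper.D} and apply the polynomial decay provided by \eqref{cond:mat} to both factors, summing in the same spirit as the proof of Lemma \ref{l:upper.sq.N}. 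Combining all pieces yields $\Pr\{W_n>x\}\le(1+o(1))\sum_{i=0}^{n-1}m^i\overline F(m^{i+1}x)$, which together with the lower bound completes the induction.
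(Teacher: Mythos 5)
Your overall strategy — induction on $n$, reducing the inductive step to a decomposition according to the value of $Z_{n-1}$, Proposition~\ref{thm:upper.bound.for.sum.R} for moderate $k$ and subexponential asymptotics for small $k$ — is broadly the same as the paper's. Two of your choices are genuinely different but workable: you extract the new term $m^{n-1}\overline F(m^nx)$ by a double limit (fix a threshold $K$, let $x\to\infty$, then $K\to\infty$), whereas the paper applies the uniform estimate of Proposition~\ref{S.star} (with shifted summands $\xi_i-2m$ of negative mean) directly over the whole range $k\le m^n x^\gamma$ and reads off $\E Z_n$ in one step; and your treatment of $\Pr\{W_{n-1}>x-x^\gamma\}$ via iterated $x^\gamma$-insensitivity is fine and is what the paper does implicitly.

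However, there is a genuine gap in your handling of the range $m^{n-1}x^\gamma<k\le m^{n-1}(x-x^\gamma)$. You propose to bound $\Pr\{Z_{n-1}\ge k\}$ via Corollary~\ref{cor:upper.D} and to control the resulting sum using the polynomial decay from condition~\eqref{cond:mat}. But \eqref{cond:mat} is \emph{not} among the hypotheses of this lemma (it appears in Theorems~\ref{thm:irv}--\ref{thm:sq.root} but not here; the lemma assumes only $\E\xi^2<\infty$, dominated variation, and $x^\gamma$-insensitivity), and it is not implied by them — dominated variation yields a polynomial \emph{lower} bound on the tail, not the upper bound \eqref{cond:mat} requires. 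Corollary~\ref{cor:upper.D} therefore cannot be invoked, and ``summing in the spirit of Lemma~\ref{l:upper.sq.N}'' fails for the same reason: that lemma explicitly assumes \eqref{cond:mat}. The paper avoids this by splitting the troublesome range at $m^{n-1}x/2$ and using only the inductive hypothesis together with the available assumptions: for $m^{n-1}x/2<k\le m^{n-1}(x-x^\gamma)$ one bounds the prefactor by $m^{n-1}x\,\Pr\{W_{n-1}>x/2\}=O\bigl(x\,\overline F(mx)\bigr)$ and the tail factor by $\overline F(m^nx^\gamma)$, with $x\,\overline F(m^nx^\gamma)\to0$ since $\E\xi^2<\infty$ and $\gamma>1/2$; and for $m^{n-1}x^\gamma<k\le m^{n-1}x/2$ one bounds the tail factor by $\overline F(m^nx/2)=O(\overline F(mx))$ via dominated variation and the prefactor by $\E\{Z_{n-1};Z_{n-1}>m^{n-1}x^\gamma\}\to0$. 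You should replace your Corollary-\ref{cor:upper.D}/\eqref{cond:mat} step by this argument (or something equivalent relying only on the inductive hypothesis, dominated variation, and $\E\xi^2<\infty$).
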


\begin{proof}
First, Lemma \ref{l:lower.2} guarantees the right lower
bound. The upper bound will be proved by induction.
It is true for $n=1$. Assume, for some $n$,
\begin{eqnarray}\label{thm:time.upper}
\Pr\{W_n>x\} &\le& (1+o(1))\sum_{i=0}^{n-1} m^i\overline F(m^{i+1}x)
\quad \mbox{ as }x\to\infty.
\end{eqnarray}
Prove that then \eqref{thm:time.upper}
holds for $n+1$. Start with the inequality
\begin{eqnarray*}
\Pr\{W_{n+1}>x\} &=& \Pr\Bigl\{\sum_{i=1}^{Z_n}\xi_i>m^{n+1}x\Bigr\}\\
&\le& \Pr\{Z_n>m^n(x-x^\gamma)\}
+\Pr\Bigl\{\sum_{i=1}^{Z_n}\xi_i>m^{n+1}x;m^nx/2<Z_n\le m^n(x-x^\gamma)\Bigr\}\\
&&\hspace{40mm} +\Pr\Bigl\{\sum_{i=1}^{Z_n}\xi_i>m^{n+1}x;m^nx^\gamma<Z_n\le m^nx/2\Bigr\}\\
&&\hspace{60mm} +\Pr\Bigl\{\sum_{i=1}^{Z_n}\xi_i>m^{n+1}x;Z_n\le m^nx^\gamma\Bigr\}\\
&=:& P_1+P_2+P_3+P_4,
\end{eqnarray*}
where the $\xi$'s are independent of $Z_n$. Due to the induction
hypothesis and since $F$ is $x^\gamma$-insensitive,
\begin{eqnarray*}
P_1 &=& \Pr\{W_n>x-x^\gamma\}\\
&\le& (1+o(1))\sum_{i=0}^{n-1} m^i \overline F(m^{i+1}(x-x^\gamma))\\
&\sim& \sum_{i=0}^{n-1} m^i \overline F(m^{i+1}x)
\quad\mbox{ as }x\to\infty.
\end{eqnarray*}

Take $\delta\in(1/\gamma-1,1)$.
All the values of $k$ not greater than $m^nx$
are negligible compared to $y^{1+\delta}$
where $y=m^{n+1}x^\gamma$, $\gamma>1/2$.
Therefore, by Proposition \ref{thm:upper.bound.for.sum.R} there
exists $c<\infty$ such that
\begin{eqnarray*}
\Pr\Bigl\{\sum_{i=1}^k(\xi_i-m)>m^{n+1}x-km\Bigr\}
&\le& ck\overline F(m^{n+1}x-km)
\end{eqnarray*}
for sufficiently large $x$ and for all $k\le m^n(x-x^\gamma)$.

Therefore, for sufficiently large $x$,
\begin{eqnarray*}
P_2 &=& \sum_{k=m^nx/2}^{m^n(x-x^\gamma)}\Pr\{Z_n=k\}
\Pr\Bigl\{\sum_{i=1}^k(\xi_i-m)>m^{n+1}x-km\Bigr\}\\
&\le& c\sum_{k=m^nx/2}^{m^n(x-x^\gamma)}
\Pr\{Z_n=k\}k\overline F(m^{n+1}x-km)\\
&\le& cm^nx \Pr\{Z_n\ge m^nx/2\} \overline F(m^{n+1}x^\gamma).
\end{eqnarray*}
Since $\E\xi^2<\infty$ and $\gamma>1/2$,
$x \overline F(m^{n+1}x^\gamma)\to 0$ as $x\to\infty$.
Hence, as $x\to\infty$,
\begin{eqnarray*}
P_2 &=& o(\Pr\{W_n\ge x/2\})\\
&=& o(\overline F(mx/2))=o(\overline F(mx)),
\end{eqnarray*}
owing the induction hypothesis \eqref{thm:time.upper}
and dominated variation of $F$.

Further, for sufficiently large $x$,
\begin{eqnarray*}
P_3 &\le& c\sum_{k=m^nx^\gamma}^{m^nx/2}
\Pr\{Z_n=k\}k\overline F(m^{n+1}x-km)\\
&\le& c\E\{Z_n;Z_n>m^nx^\gamma\} \overline F(m^{n+1}x/2)\\
&=& o(\overline F(mx))\quad\mbox{ as }x\to\infty,
\end{eqnarray*}
again because of dominated variation of $F$.

Finally,
\begin{eqnarray*}
P_4 &=& \sum_{k=1}^{m^nx^\gamma}\Pr\{Z_n=k\}
\Pr\Bigl\{\sum_{i=1}^k(\xi_i-2m)>m^{n+1}x-2km\Bigr\}.
\end{eqnarray*}
The distribution $F$ is dominated varying and long-tailed
(constant-insensitive) which implies it belongs to the class
${\mathcal S}^*$, see, e.g. \cite[Theorem 3.29]{FKZ}.
Also, the expression $m^{n+1}x-2km$ tends to infinity as $x\to\infty$
uniformly in $k\le mx^\gamma$. This allows to apply here
Proposition \ref{S.star} for random variables $\eta_i:=\xi_i-2m$ with
negative mean; it ensures that, uniformly in $k\le mx^\gamma$,
\begin{eqnarray*}
\Pr\Bigl\{\sum_{i=1}^k(\xi_i-2m)>m^{n+1}x-2km\Bigr\}
&\le& (1+o(1))k\overline F(m^{n+1}x-2km)\\
&\sim& k\overline F(m^{n+1}x)\quad\mbox{ as }x\to\infty,
\end{eqnarray*}
because $F$ is $x^\gamma$-insensitive. Thus,
\begin{eqnarray*}
P_4 &\sim& \overline F(m^{n+1}x)
\sum_{k=1}^{m^nx^\gamma}\Pr\{Z_n=k\}k\\
&\sim& \overline F(m^{n+1}x)\E Z_n=m^n\overline F(m^{n+1}x).
\end{eqnarray*}

Combining bounds for $P_1$ through $P_4$ we deduce that
\begin{eqnarray*}
\Pr\{W_{n+1}>x\} &\le&
\Pr\{W_n>mx\}+m^n\overline F(m^{n+1}x)+o(\overline F(mx))
\quad\mbox{ as } x\to\infty,
\end{eqnarray*}
and the induction hypothesis \eqref{thm:time.upper}
completes the proof.
\end{proof}

If the distribution $F$ is rapidly varying then
\begin{eqnarray}\label{infty.1}
\sum_{i=0}^\infty m^i\overline F(m^{i+1}x)
&\sim& \overline F(mx)\quad\mbox{ as }x\to\infty.
\end{eqnarray}
Indeed, fix $\varepsilon>0$ and choose $x(\varepsilon)$ such that
$\overline F(mx)\le\varepsilon\overline F(x)$ for every $x>x(\varepsilon)$.
Then, for $x>x(\varepsilon)$,
$$
\sum_{i=1}^\infty m^i\overline F(m^{i+1}x)
\le \sum_{i=1}^\infty (m\varepsilon)^i\overline F(mx)
=\frac{m\varepsilon}{1-m\varepsilon}\overline F(mx).
$$
The constant multiplier on the right side may be made as small
as we please by appropriate choice of $\varepsilon$,
so the equivalence \eqref{infty.1} follows.

\begin{lemma}\label{l:upper.2}
Let $\overline F(x)=e^{-R(x)}$ where $R(x)$ is regularly
varying with index $\beta\in(0,1/2)$.
In the case $\beta\in[\frac{3-\sqrt 5}{2},1/2)$
assume also that the condition \eqref{cond:R.der} holds.
Additionally assume that $F\in{\mathcal S}^*$.
Then, for every fixed $n$,
$$
\Pr\{W_n>x\}\sim\overline F(mx)\quad\mbox{ as }x\to\infty.
$$
\end{lemma}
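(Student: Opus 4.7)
The plan is to prove the asymptotic by induction on $n$; the base case $n=1$ is trivial since $\Pr\{W_1>x\}=\Pr\{\xi>mx\}=\overline F(mx)$. The lower bound at any $n\ge 1$ follows directly from Lemma \ref{l:lower.2}: since the Weibull-type hazard $R$ is regularly varying with index $\beta<1/2$, the distribution $F$ is $\sqrt x$-insensitive and has finite variance, so
$$
\Pr\{W_n>x\}\ge (1+o(1))\sum_{i=0}^{n-1} m^i\overline F(m^{i+1}x),
$$
and rapid variation of $F$ together with \eqref{infty.1} reduces this to $\overline F(mx)$.

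For the inductive upper bound, assuming $\Pr\{W_n>x\}\sim\overline F(mx)$, I would condition on $Z_n=k$ and write $\Pr\{W_{n+1}>x\}=\sum_{k}\Pr\{Z_n=k\}\Pr\{S_k>m^{n+1}x\}$ with $S_k:=\xi_1+\cdots+\xi_k$ independent of $Z_n$, then split at $k=m^nx^\gamma$ and $k=m^n(x-x^\gamma)$ for an exponent $\gamma\in(0,1)$ to be tuned. The large-$Z_n$ contribution $\Pr\{W_n>x-x^\gamma\}$ equals $(1+o(1))\overline F(mx)$ by the inductive hypothesis together with the $x^\gamma$-insensitivity of the Weibull tail at scale $mx$, provided $\gamma<1-\beta$. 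The small-$Z_n$ contribution, bounded via Proposition \ref{S.star} applied to the negative-mean variables $\eta_i:=\xi_i-2m$ (recall $F\in\mathcal S^*$), is at most $(1+o(1))m^n\overline F(m^{n+1}x/2)=o(\overline F(mx))$ by rapid variation. The moderate-$Z_n$ contribution is handled through Proposition \ref{thm:upper.bound.for.sum.W}, applied to the centered variables $\xi_i-m$ with truncation $y$ of order the excess $m^{n+1}x-km\ge m^{n+1}x^\gamma$; the applicability hypothesis $kR(y)/(m^{n+1}x-km)^2\le 1/c$ combined with $k\le m^nx$ and $R(t)\asymp t^\beta$ forces $\gamma\ge 1/(2-\beta)$.

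The two conditions $1/(2-\beta)\le\gamma<1-\beta$ are simultaneously satisfiable exactly when $(2-\beta)(1-\beta)>1$, equivalent to $\beta^2-3\beta+1>0$; on $(0,1)$ this reads $\beta<(3-\sqrt 5)/2$, precisely the threshold of the lemma. In that range each moderate summand is bounded by $(k+1)\overline F((1-\varepsilon)(m^{n+1}x-km))$, and combining this with the inductive tail estimate $\Pr\{Z_n\ge k\}\sim\overline F(k/m^{n-1})$ through summation by parts and a Laplace-type analysis of the resulting discrete convolution of two Weibull tails yields $o(\overline F(mx))$ for the moderate piece (for $n\ge 1$, the minimum of the sum of the two hazards over the convolution variable is strictly larger than $R(mx)$). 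For $\beta\in[(3-\sqrt 5)/2,1/2)$ the window for $\gamma$ disappears and Proposition \ref{thm:upper.bound.for.sum.W} alone is insufficient; the additional hypothesis \eqref{cond:R.der} provides the discrete regularity of $R$ needed to sharpen the truncated exponential Chebyshev argument underlying that proposition at the integer-lattice scale, restoring the same $o(\overline F(mx))$ bound. The main obstacle of the proof is thus the moderate-$Z_n$ piece: matching the tail of $Z_n$ against the range of validity of the Weibullian sum bound is what produces the threshold $(3-\sqrt 5)/2$, and this is where essentially all the technical work of the lemma lies.
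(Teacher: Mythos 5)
Your overall architecture matches the paper: induction on $n$ with the trivial base case, the lower bound from Lemma~\ref{l:lower.2} plus rapid variation, an inductive upper bound obtained by conditioning on $Z_n=k$, and the threshold $\beta<(3-\sqrt 5)/2$ correctly derived from the incompatibility of $\gamma>1/(2-\beta)$ (needed for Proposition~\ref{thm:upper.bound.for.sum.W}) and $\gamma<1-\beta$ (needed for $x^\gamma$-insensitivity). But two of the steps you gesture at are where the real work is, and both are described incorrectly or too vaguely to count as a proof.

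First, the non-large range. You introduce an extra split at $k=m^nx^\gamma$, handle the small piece via Proposition~\ref{S.star} with $\eta_i=\xi_i-2m$, and claim the moderate piece is $o(\overline F(mx))$ by ``summation by parts and a Laplace-type analysis'' because ``the minimum of the sum of the two hazards \dots is strictly larger than $R(mx)$.'' That is not the mechanism, and the claim is delicate: near $k\approx m^n(x-x^\gamma)$ the first hazard $R(k/m^{n-1})\approx R(mx)$ and the second is only $O(x^{\gamma\beta})$, so the sum is $R(mx)(1+o(1))$ --- a genuine Laplace argument would still have to beat the polynomial factor $(k+1)\asymp m^nx$ and the number of terms. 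The paper avoids all of this: it applies Proposition~\ref{thm:upper.bound.for.sum.W} to the whole range $k\le m^n(x-x^{\gamma_2})$ at once (no separate small-$k$ split), bounds $\Pr\{Z_n=k\}\le c\overline F(k/m^{n-1})$ by the induction hypothesis, turns the sum into the convolution integral $\int_0^{mx}\overline F(y)\overline F(mx-y)\,dy$, and gets $o(\overline F(mx))$ by combining $F\in\mathcal S^*$ (so the integral is $\sim 2\overline F(mx)\int_0^\infty\overline F$) with two $o(1)$ factors coming from rapid variation: $\overline F\bigl((m^nx-k)m(1-\varepsilon)\bigr)=o\bigl(\overline F(m^nx-k)\bigr)$ since $m(1-\varepsilon)>1$, and $y\overline F\bigl(m^{n-1}(mx-y)\bigr)=o\bigl(\overline F(mx-y)\bigr)$. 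Thus $\mathcal S^*$ is used for the moderate/full piece, not (only) to invoke Proposition~\ref{S.star} on a small piece.

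Second, the case $\beta\in[(3-\sqrt5)/2,1/2)$. Your account --- that \eqref{cond:R.der} ``provides the discrete regularity of $R$ needed to sharpen the truncated exponential Chebyshev argument \dots at the integer-lattice scale'' --- is not what the hypothesis does. The paper introduces two distinct exponents $\gamma_1\in(1/2,1-\beta)$ and $\gamma_2>1/(2-\beta)$ with $\gamma_1<\gamma_2$: $\gamma_1$ governs the large-$Z_n$ piece $P_1$, $\gamma_2$ the applicability range of Proposition~\ref{thm:upper.bound.for.sum.W}. This creates a new intermediate range $k\in[m^n(x-x^{\gamma_2}),\,m^n(x-x^{\gamma_1})]$, the term $P_{22}$, which is handled by choosing a $k$-dependent truncation level $y_k$ with $R(y_k)=c_2k^{1+\gamma}/x$ and then using \eqref{cond:R.der} precisely as a Lipschitz-type increment bound $\bigl(R(x)-R(y)\bigr)/(x-y)\le c_4R(x)/x$ to control $R(mx)-R(mx-k/m^{n-1})$ linearly in $k$; this produces a summable tail $e^{-c_6k^{1+\gamma}/x}$. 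None of that construction --- the two exponents, the $k$-dependent $y_k$, or the role of \eqref{cond:R.der} as an increment bound --- is present in your sketch, so the $\beta\ge(3-\sqrt5)/2$ case is a genuine gap.
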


\begin{proof}
Since $\beta<1/2$, the distribution $F$ is $\sqrt x$-insensitive
which by Lemma \ref{l:lower.2} implies the lower bound
$\Pr\{W_n>x\}\ge(1+o(1))\overline F(mx)$ as $x\to\infty$.

To prove the upper bound, apply induction arguments.
For $n=1$, we have the equality $\Pr\{W_1>x\}=\overline F(mx)$.
Assume now $\Pr\{W_n>x\} \sim \overline F(mx)$
for some $n\ge 1$. Prove that then it holds for $n+1$.

If $\beta<\frac{3-\sqrt 5}{2}$ then the interval
$(\frac{1}{2-\beta},1-\beta)$ is not empty;
in this case we take $\gamma_1=\gamma_2\in(\frac{1}{2-\beta},1-\beta)$.
If $\beta\in[\frac{3-\sqrt 5}{2},1/2)$ then
$\frac{1}{2-\beta}\ge 1-\beta$ and we take
$\gamma_1\in(1/2,1-\beta)$ and $\gamma_2>1/(2-\beta)$ so that
$\gamma_2\ge\gamma_1$. Since $\gamma_1<1-\beta$,
$F$ is $x^{\gamma_1}$-insensitive. Start with the inequality
\begin{eqnarray*}
\Pr\Bigl\{\sum_{i=1}^{Z_n}\xi_i>m^{n+1}x\Bigr\}
&\le& \Pr\{Z_n>m^n(x-x^{\gamma_1})\}
+\Pr\Bigl\{\sum_{i=1}^{Z_n}\xi_i>m^{n+1}x;Z_n\le m^n(x-x^{\gamma_1})\Bigr\}\\
&=:& P_1+P_2,
\end{eqnarray*}
where the $\xi$'s do not depend on $Z_n$. By the induction
hypothesis and since $F$ is $x^{\gamma_1}$-insensitive,
$$
P_1\sim\overline F(m(x-x^{\gamma_1}))
\sim\overline F(mx)\quad\mbox{ as }x\to\infty.
$$
It remains to prove that $P_2=o(\overline F(mx))$ as $x\to\infty$.
Start with the following decomposition:
\begin{eqnarray*}
P_2 &=& \biggl(\sum_{k=1}^{m^n(x-x^{\gamma_2})-1}
+\sum_{k=m^n(x-x^{\gamma_2})}^{m^n(x-x^{\gamma_1})}\biggr)
\Pr\{Z_n=k\}
\Pr\Bigl\{\sum_{i=1}^k(\xi_i-m)>m^{n+1}x-km\Bigr\}\\
&=:& P_{21}+P_{22}.
\end{eqnarray*}
In the first sum $P_{21}$ we have
 $m^{n+1}x-mk\ge m^{n+1}x^{\gamma_2}\gg x^{1/(2-\beta)}$
 due to the choice $\gamma_2>1/(2-\beta)$.
 The function $R(x)/x^2$
is regularly varying with index $\beta-2$. Hence
$kR(m^{n+1}x-km)/(m^{n+1}x-km)^2\to 0$ as $x\to\infty$
uniformly in $k\le m^n(x-x^{\gamma_2})$.
This observation together with regular variation of $R(x)$
allows us to apply Proposition \ref{thm:upper.bound.for.sum.W}
with $y=(1-\varepsilon)x$ which ensures that
\begin{eqnarray*}
\Pr\Bigl\{\sum_{i=1}^k(\xi_i-m)>m^{n+1}x-mk\Bigr\}
&\le& k\overline F((m^{n+1}x-mk)(1-\varepsilon))
\end{eqnarray*}
for sufficiently large $x$ and for all $k\le m^n(x-x^{\gamma_2})$.
Thus, for sufficiently large $x$,
\begin{eqnarray*}
P_{21} &\le& \sum_{k=1}^{m^n(x-x^{\gamma_2})}\Pr\{Z_n=k\}k
\overline F((m^nx-k)m(1-\varepsilon)).
\end{eqnarray*}
Take $\varepsilon>0$ so small that $m(1-\varepsilon)>1$.
Then by rapid variation of $F$, as $x\to\infty$,
$$
\overline F((m^nx-k)m(1-\varepsilon))
= o(\overline F(m^nx-k))
\quad\mbox{uniformly in }k\le m^n(x-x^{\gamma_2}).
$$
In addition, owing the induction hypothesis,
\begin{eqnarray*}
\Pr\{Z_n=k\} &\le& \Pr\{W_n\ge k/m^n\} \le c\overline F(k/m^{n-1}),
\end{eqnarray*}
for some $c<\infty$. Thus, as $x\to\infty$,
\begin{eqnarray*}
P_{21} &\le& o(1)\int_0^{m^n(x-x^{\gamma_2})}
y\overline F(y/m^{n-1})\overline F(m^nx-y)dy\\
&=& o(1)\int_0^{m(x-x^{\gamma_2})}
y\overline F(y)\overline F(m^{n-1}(mx-y))dy.
\end{eqnarray*}
Since $m^{n-1}\ge m>1$ and $\beta>0$,
$$
y\overline F(m^{n-1}(mx-y))=o(\overline F(mx-y))
$$
as $x\to\infty$ uniformly in $y\le m(x-x^{\gamma_2})$. Therefore,
\begin{eqnarray*}
P_{21} &\le& o(1)\int_0^{mx} \overline F(y)\overline F(mx-y)dy.
\end{eqnarray*}
The inclusion $F\in{\mathcal S}^*$ means that
\begin{eqnarray*}
\int_0^{mx} \overline F(y)\overline F(mx-y)dy
\sim 2\overline F(mx)\int_0^\infty \overline F(y)dy
\quad\mbox{ as }x\to\infty,
\end{eqnarray*}
which finally implies $P_{21}=o(\overline F(mx))$.
In the case $\beta<\frac{3-\sqrt 5}{2}$ this completes the proof
because then $\gamma_1=\gamma_2$ and $P_{22}=0$.

If $\beta<1/2$ then it remains to prove that
$P_{22}=o(\overline F(mx))$ too. We have
\begin{eqnarray*}
P_{22} &=& \sum_{k=m^n x^{\gamma_1}}^{m^n x^{\gamma_2}}
\Pr\{Z_n=m^nx-k\}
\Pr\Bigl\{\sum_{i=1}^{m^n x-k}(\xi_i-m)>mk\Bigr\}.
\end{eqnarray*}
By the induction hypothesis
$$
\Pr\{Z_n=m^nx-k)\}\le\Pr\{W_n\ge x-k/m^n\}
\sim\overline F(mx-k/m^{n-1}),
$$
so that
\begin{eqnarray*}
P_{22} &\le& c_1\sum_{k=m^nx^{\gamma_1}}^{m^nx^{\gamma_2}}
\overline F(mx-k/m^{n-1}) (m^nx-k+1) \overline F(y_k)
\end{eqnarray*}
for any $y_k$ satisfying the inequalities
$y_k\le mk/2$ and $m^nx-k\le (mk)^2/cR(y_k)$,
where $c=c(1/2)$ is defined in Proposition \ref{thm:upper.bound.for.sum.W}.
Choose $\gamma\in(2\beta,1)$ such that
\begin{equation}\label{choice.gamma}
\frac{1}{\gamma_1}-1<\gamma<\frac{1}{\gamma_2}-1+\beta,
\end{equation}
it is possible if we choose $\gamma_2>1/(2-\beta)$ sufficiently
close to $1/(2-\beta)$. Then take $y_k$ which solves
$R(y_k) = m^{2-n}k^{1+\gamma}/cx=c_2 k^{1+\gamma}/x$.
With this choice, $y_k\le mk/2$ for $k\le m^nx^{\gamma_2}$
and sufficiently large $x$,
by the right inequality in \eqref{choice.gamma},
and $m^nx-k\le (mk)^2/cR(y_k)$.

Further, since $\overline F(y_k)=e^{-R(y_k)}$,
\begin{eqnarray*}
P_{22} &\le& c_3 x \sum_{k=m^nx^{\gamma_1}}^{m^nx^{\gamma_2}}
\overline F(mx-k/m^{n-1}) \overline F(y_k)\\
&\le& c_3 x \overline F(mx)\sum_{k=m^nx^{\gamma_1}}^{m^nx^{\gamma_2}}
e^{R(mx)-R(mx-k/m^{n-1})-R(y_k)}.
\end{eqnarray*}
By the condition \eqref{cond:R.der} on the increments
of $R$ and by regular variation of $R$ we have
\begin{eqnarray*}
\frac{R(x)-R(y)}{x-y} &\le& c_4\frac{R(x)}{x},\quad x\ge y\ge 1,
\end{eqnarray*}
which implies
\begin{eqnarray*}
R(mx)-R(mx-k/m^{n-1})-R(y_k) &\le& c_5 k R(mx)/x-c_2k^{1+\gamma}/x\\
&=& (c_5R(mx)-c_2k^\gamma)k/x.
\end{eqnarray*}
 Since $R(mx)$ is regularly varying with index $\beta<1/2$
and $k\ge m^n x^{\gamma_1}$, the choice $\gamma_1\in(1/2,1-\beta)$
and $\gamma\in(2\beta,1)$ ensures $R(mx)=o(k)$. Hence,
\begin{eqnarray*}
R(mx)-R(mx-k/m^{n-1})-R(y_k) &\le& -c_6k^{1+\gamma}/x,
\end{eqnarray*}
which yields
\begin{eqnarray*}
P_{22} &\le&
c_4 x \overline F(mx)\sum_{k=m^nx^{\gamma_1}}^\infty
e^{-c_6k^{1+\gamma}/x} = o(\overline F(mx))\quad\mbox{as }x\to\infty,
\end{eqnarray*}
due to $\gamma_1(1+\gamma)>1$, by the left inequality in \eqref{choice.gamma}.
Combining altogether we deduce that
$P_2=o(\overline F(mx))$ and consequently
$\Pr\{W_{n+1}>x\} \sim P_1\sim \overline F(mx)$ as $x\to\infty$
and the proof is complete.
\end{proof}

\section{Proofs of Theorems \ref{thm:irv}, \ref{thm:sq.root}
and \ref{thm:asy.uni.W}}
\label{sec:proofs}

\begin{proof}[Proof of Theorem {\rm\ref{thm:irv}}]
The bounds \eqref{domin.b} follow from Lemma \ref{l:lower}
and Corollary \ref{cor:upper.D}.
All other assertions follow from the equivalence \eqref{thm:DFK}
and from Lemmas \ref{l:lower} and \ref{l:upper}.
\end{proof}

\begin{proof}[Proof of Theorem {\rm\ref{thm:sq.root}
follows from Lemmas \ref{l:upper.sq}, \ref{l:lower.2}
and \ref{l:upper.sq.N}}]
\end{proof}

\begin{proof}[Proof of Theorem {\rm\ref{thm:asy.uni.W}}]
The lower bound for the general case $\beta<1$
follows from Lemma \ref{l:lower}.
The upper bound follows from Lemma \ref{l:upper.W}
which reduces the problem to the finite time horizon $N$
and further induction arguments like
\begin{eqnarray*}
\Pr\{W_N>x\} &=& \Pr\Bigl\{\sum_{i=1}^\xi W_{N-1}^{(i)}>mx\Bigr\}\\
&\le& \Pr\{\xi>mx(1-\varepsilon)\}
+\Pr\Bigl\{\sum_{i=1}^\xi W_{N-1}^{(i)}>mx;\xi\le mx(1-\varepsilon)\Bigr\},
\end{eqnarray*}
where $W_{N-1}^{(1)}$, $W_{N-1}^{(2)}$, \ldots\ are independent
copies of $W_{N-1}$. Assuming that $W_{N-1}$ has a tail
not heavier than $c\overline F((1-\varepsilon)x)$  me may
estimate here the second probability as follows:
\begin{eqnarray*}
\Pr\Bigl\{\sum_{i=1}^\xi W_{N-1}^{(i)}>mx;\xi\le mx(1-\varepsilon)\Bigr\}
&=& \sum_{k=1}^{mx(1-\varepsilon)}\Pr\{\xi=k\}
\Pr\Bigl\{\sum_{i=1}^k (W_{N-1}^{(i)}-k)>mx-k\Bigr\}.
\end{eqnarray*}
By Proposition \ref{thm:upper.bound.for.sum.W},
\begin{eqnarray*}
\Pr\Bigl\{\sum_{i=1}^k (W_{N-1}^{(i)}-k)>mx-k\Bigr\}
&\le& (k+1)\overline F((1-\varepsilon)(mx-k))
\end{eqnarray*}
as $x\to\infty$ uniformly in $k\le mx(1-\varepsilon)$;
note that the condition $k\le mx(1-\varepsilon)$ implies
$mx-k\ge mx\varepsilon$ and hence covers
both conditions of Proposition \ref{thm:upper.bound.for.sum.W}.
Thus,
\begin{eqnarray*}
\sum_{k=1}^{mx(1-\varepsilon)}\Pr\{\xi=k\}
\Pr\Bigl\{\sum_{i=1}^k (W_{N-1}^{(i)}-k)>mx-k\Bigr\}
&\le& 2\sum_{k=1}^{mx(1-\varepsilon)}\Pr\{\xi=k\}k
\overline F((1-\varepsilon)(mx-k))\\
&=& o(\overline F(m(1-2\varepsilon)x))
\end{eqnarray*}
as $x\to\infty$, by standard properties of Weibull type distributions.
This completes the proof of upper bound for the case $\beta<1$.

In the case $\beta<1/2$ the distribution $F$ is $\sqrt x$-insensitive
which by Lemma \ref{l:lower.2} implies the lower bound
$\Pr\{W_n>x\}\ge(1+o(1))\overline F(mx)$ as $x\to\infty$.

Now prove the upper bound for the case $\beta<1/2$.
Fix $\varepsilon>0$. Owing Lemma \ref{l:upper.W} we find $N$
so that, for all $n>N$ and for all sufficiently large $x$,
$$
\Pr\{W_n>x\} \le (1+\varepsilon)\Pr\{W_N>(1-\varepsilon)x\}.
$$
As in the proof of Lemma \ref{l:upper.2},
take $\gamma\in(1/(2-\beta),1-\beta)$ so $F$ is $x^\gamma$-insensitive.
Make use of the decomposition, for $n>N+1$,
\begin{eqnarray*}
\Pr\{W_n>x\} &\le& \Pr\{\xi>m(x-x^\gamma)\}
+\Pr\Bigl\{\sum_{i=1}^{\xi} W_{n-1}^{(i)}>mx;\xi\le m(x-x^\gamma)\Bigr\}\\
&=:& P_1+P_2.
\end{eqnarray*}
Since $F$ is $x^\gamma$-insensitive,
$$
P_1=\Pr\{\xi>m(x-x^\gamma)\}\sim\overline F(mx)\quad\mbox{ as }x\to\infty.
$$
Further, make use of Lemma \ref{l:upper.W} which is applicable
because $n-1>N$: ultimately in $y$,
\begin{eqnarray*}
\Pr\{W_{n-1}>y\} &\le& (1+\varepsilon)\Pr\{W_N>(1-\varepsilon)y\}\\
&\le& (1+2\varepsilon)\overline F((1-\varepsilon)my),
\end{eqnarray*}
by virtue of Lemma \ref{l:upper.2}.
Choose $\varepsilon>0$ so small
that $m_*:=(1-\varepsilon)m>1$, it is possible due to $m>1$.
The family $\{W_{n-1}-1,n>N+1\}$ satisfies the conditions
of Corollary \ref{cor:upper.bound.for.sum.W} which further
allows to prove that $P_2=o(\overline F(mx))$ as $x\to\infty$
uniformly in $n>N+1$
in the same way as in the proof of Lemma \ref{l:upper.2}.
\end{proof}

\section{The case of regularly varying tail with index $-1$;
proof of Theorem \ref{T10}}
\label{sec:first.moment}

As proven in Lemma \ref{l:lower}, for every $\varepsilon>0$,
$$
\Pr\{W_n>x\} \ge (1+o(1))\sum_{k=0}^{n-1} m^k
\overline F(m^{k+1}(1+\varepsilon)x)
$$
as $x\to\infty$ uniformly in $n\ge 1$.
Since $F$ is regularly varying, we deduce from here that
$$
\Pr\{W_n>x\} \ge (1+o(1))\sum_{k=0}^{n-1} m^k
\overline F(m^{k+1}x)
$$
as $x\to\infty$ uniformly in $n\ge 1$.
Then it remains to prove the following upper bound:
for every fixed $\varepsilon>0$,
\begin{eqnarray}\label{T10.upper}
\Pr\{W_n>x\} &\le& (1+o(1))\sum_{k=0}^{n-1} m^k
\overline F(m^{k+1}x(1-\varepsilon)).
\end{eqnarray}
The method for proving upper bounds based on
Lemma \ref{l:upper} doesn't work here because it essentially
requires the condition \eqref{cond:mat}. By this reason we
proceed in a different way. Define events
$$
A_k(x):=\{\xi_i^{(k)}>m^{k+1}x(1-\varepsilon)
\mbox{ for some }i\le Z_k\}.
$$
Clearly,
$$
\Pr\{A_k(x)|Z_k=j\}\leq j \overline F(m^{k+1}x(1-\varepsilon)),\qquad j\ge 1.
$$
Therefore, $\Pr\{A_k(x)\}\leq m^k \overline F(m^{k+1}x(1-\varepsilon))$ and
$$
\Pr\Bigl\{\bigcup_{k=0}^{n-1}A_k(x)\Bigr\}
\le \sum_{k=0}^{n-1} \Pr\{A_k(x)\}
\le\sum_{k=0}^{n-1} m^k \overline F(m^{k+1}x(1-\varepsilon)).
$$
Owing to this and the upper bound
\begin{eqnarray*}
\Pr\{W_n>x\} &\le& \Pr\Bigl\{W_n>x,\bigcap_{k=0}^{n-1}
\overline{A_k(x)}\Bigr\}
+\Pr\Bigl\{\bigcup_{k=0}^{n-1}A_k(x)\Bigr\},
\end{eqnarray*}
we conclude that \eqref{T10.upper} will be implied by
the following relation: for every fixed $\varepsilon>0$,
\begin{eqnarray}\label{T10.ttancated}
\Pr\Bigl\{W_n>x,\bigcap_{k=0}^{n-1}\overline{A_k(x)}\Bigr\}
&=& o\Bigl(\sum_{k=0}^{n-1} m^k \overline F(m^{k+1}x)\Bigr)
\end{eqnarray}
as $x\to\infty$ uniformly in $n\ge 1$.
By the Chebyshev inequality, for every $\lambda>0$
\begin{eqnarray*}
\Pr\Bigl\{W_n>x,\bigcap_{k=0}^{n-1}\overline{A_k(x)}\Bigr\}
&\le& \frac{\E\bigl\{e^{\lambda Z_n}-1;\bigcap_{k=0}^{n-1}\overline{A_k(x)}\bigr\}}
{e^{\lambda m^n x}-1}\\
&=& \frac{\E\bigl\{e^{\lambda Z_n};\bigcap_{k=0}^{n-1}\overline{A_k(x)}\bigr\}-1}
{e^{\lambda m^n x}-1}
+\frac{\Pr\bigl\{\bigcup_{k=0}^{n-1} A_k(x)\bigr\}}
{e^{\lambda m^n x}-1},
\end{eqnarray*}
so that the relation \eqref{T10.ttancated} will follow
if we find $\lambda=\lambda_n(x)$ such that
\begin{eqnarray}\label{T10.ttancated.2}
\lambda m^n x &\to& \infty
\end{eqnarray}
and
\begin{eqnarray}\label{T10.ttancated.1}
\frac{\E\bigl\{e^{\lambda Z_n};\bigcap_{k=0}^{n-1}\overline{A_k(x)}\bigr\}-1}
{e^{\lambda m^n x}-1}
&=& o\Bigl(\sum_{k=0}^{n-1} m^k \overline F(m^{k+1}x)\Bigr).
\end{eqnarray}
In order to find $\lambda=\lambda_n(x)$
satisfying \eqref{T10.ttancated.2} and \eqref{T10.ttancated.1}
we proceed with a suitable exponential
bounds for bounded random variables. Take $\lambda_{nn}>0$ and
consider the following exponential moment
\begin{eqnarray*}
\E\Bigl\{e^{\lambda_{nn} Z_n};\bigcap_{k=0}^{n-1}
\overline{A_k(x)}\Bigr\}
&=& \sum_{i=1}^\infty \E\Bigl\{e^{\lambda_{nn} Z_n};
\bigcap_{k=0}^{n-1} \overline{A_k(x)},Z_{n-1}=i\Bigr\}\\
&=& \sum_{i=1}^\infty \E\Bigl\{
e^{\lambda_{nn}(\xi_1^{(n-1)}+\ldots+\xi_i^{(n-1)})};
\overline{A_{n-1}(x)},
\bigcap_{k=0}^{n-2} \overline{A_k(x)},Z_{n-1}=i\Bigr\}.
\end{eqnarray*}
Note that the events $\bigcap_{k=0}^{n-2} \overline{A_k(x)}$ and
$Z_{n-1}=i$ do not depend on the $\xi^{(n-1)}$'s. Therefore,
\begin{eqnarray*}
\E\Bigl\{e^{\lambda_{nn} Z_n};\bigcap_{k=0}^{n-1}
\overline{A_k(x)}\Bigr\}
&=& \sum_{i=1}^\infty \E\Bigl\{
e^{\lambda_{nn}(\xi_1^{(n-1)}+\ldots+\xi_i^{(n-1)})};
\overline{A_{n-1}(x)}\Bigr\}
\Pr\Bigl\{\bigcap_{k=0}^{n-2} \overline{A_k(x)},Z_{n-1}=i\Bigr\}\\
&=& \sum_{i=1}^\infty \Bigl(\E\{e^{\lambda_{nn}\xi};\xi\le m^n x(1-\varepsilon)\}
\Bigr)^i
\Pr\Bigl\{\bigcap_{k=0}^{n-2} \overline{A_k(x)},Z_{n-1}=i\Bigr\}.
\end{eqnarray*}
If we put
\begin{eqnarray*}
\lambda_{n,n-1} &:=& \log \E\{e^{\lambda_{nn}\xi};
\xi\le m^n x(1-\varepsilon)\},
\end{eqnarray*}
then we receive a recursive equality
\begin{eqnarray*}
\E\Bigl\{e^{\lambda_{nn} Z_n};\bigcap_{k=0}^{n-1}
\overline{A_k(x)}\Bigr\}
&=& \E\Bigl\{e^{\lambda_{n,n-1}Z_{n-1}};
\bigcap_{k=0}^{n-2} \overline{A_k(x)}\Bigr\}.
\end{eqnarray*}
We iterate this recursion $n$ times. Let us
estimate $\lambda_{n,n-1}$ via $\lambda_{nn}$.

For every $z>0$ and $y\le z$, $e^y \le 1+y+y^2e^z/2$.
Therefore,
\begin{eqnarray*}
\E\{e^{\lambda_{nn}\xi};\xi\le m^n x(1-\varepsilon)\} &\le&
1+\lambda_{nn}m+\lambda_{nn}^2 \E\{\xi^2;\xi\le m^n x\}
e^{\lambda_{nn}m^nx(1-\varepsilon)}/2.
\end{eqnarray*}
Since $F$ is regularly varying with index $-1$,
for sufficiently large $x$,
\begin{eqnarray*}
\E\{\xi^2;\xi\le m^nx\} &\le& \frac{3}{2}(m^n x)^2\overline F(m^n x).
\end{eqnarray*}
Hence,
\begin{eqnarray}\label{lambda.n-1.1}
\E\{e^{\lambda_{nn}\xi};\xi\le m^n x(1-\varepsilon)\}
&\le& 1+\lambda_{nn}\Bigl(m+\frac{3}{4}
(\lambda_{nn} m^n x)m^n x\overline F(m^n x)
e^{\lambda_{nn}m^nx(1-\varepsilon)}\Bigr).
\end{eqnarray}
Denote
$$
p_n(x) := \sum_{k=0}^{n-1}m^k\overline F(m^{k+1}x)
$$
and make a special choice of initial $\lambda_n$:
$$
\lambda_{nn}=\lambda_n(x):=(1+\varepsilon)\frac{\log\frac{1}{p_n(x)x}
-2\log\log\frac{1}{p_n(x)x}}
{x\prod_{k=0}^{n-1}\Bigl(m+\frac{m^{k+1}x\overline F(m^{k+1}x)}
{(p_n(x)x)^{1-\varepsilon^2}}\Bigr)}.
$$
For the product, we have the following inequalities:
\begin{eqnarray*}
m^n \le \prod_{k=0}^{n-1}\Bigl(m+\frac{m^{k+1}x\overline F(m^{k+1}x)}
{(p_n(x)x)^{1-\varepsilon^2}}\Bigr)
&=& m^n \prod_{k=0}^{n-1}\Bigl(1+\frac{m^k \overline F(m^{k+1}x)}
{p_n(x)}(p_n(x)x)^{\varepsilon^2}\Bigr)\\
&\le& m^n e^{(p_n(x)x)^{\varepsilon^2}
\sum_{k=0}^{n-1}\frac{m^k \overline F(m^{k+1}x)}{p_n(x)}}\\
&=& m^n e^{(p_n(x)x)^{\varepsilon^2}}.
\end{eqnarray*}
Note that then this product is asymptotically equivalent to
$m^n$ because $p_n(x)x\to 0$. Note also that then
\begin{eqnarray}\label{devs}
\frac{1}{e^{\lambda_{nn} m^nx}-1}
&\le& \frac{1}{\bigl(\frac{1}{p_n(x)x}\bigr)^{1+\varepsilon+o(1)}
\log^{-2(1+\varepsilon+o(1))}\frac{1}{p_n(x)x}-1}\nonumber\\
&\sim& \bigl(p_n(x)x\bigr)^{1+\varepsilon+o(1)}
\log^{2(1+\varepsilon+o(1))}\frac{1}{p_n(x)x}\nonumber\\
&\le& c_1(p_n(x)x)^{1+\varepsilon/2}
\end{eqnarray}
ultimately in $x$ uniformly in $n$. In particular, it goes
to zero and the relation \eqref{T10.ttancated.2} follows.

Now estimate all $\lambda_{nk}$, $k\le n-1$.
With the choice of $\lambda_{nn}$ made,
it follows from \eqref{lambda.n-1.1} that
\begin{eqnarray*}
\E\{e^{\lambda_{nn}\xi};\xi\le m^n x(1-\varepsilon)\}
&\le& 1+\lambda_{nn}\Bigl(m+
\frac{3(1+\varepsilon)}{4}\log\frac{1}{p_n(x)x}\\
&&\hspace{20mm} \times m^n x\overline F(m^n x)
e^{(1-\varepsilon^2)\bigl(\log\frac{1}{p_n(x)x}
-2\log\log\frac{1}{p_n(x)x}\bigr)}\Bigr)\\
&\le& 1+\lambda_{nn}\Bigl(m+ m^nx\overline F(m^n x)
e^{(1-\varepsilon^2)\log\frac{1}{p_n(x)x}}\Bigr),
\end{eqnarray*}
provided $1+\varepsilon<4/3$ and $2(1-\varepsilon^2)>1$. Thus,
\begin{eqnarray*}
\E\{e^{\lambda_{nn}\xi};\xi\le m^n x(1-\varepsilon)\}
&\le& 1+\lambda_{nn}\Bigl(m+ \frac{m^nx\overline F(m^n x)}
{(p_n(x)x)^{1-\varepsilon^2}}\Bigr)\\
&\le& \exp\Bigl\{\lambda_{nn}\Bigl(m+ \frac{m^nx\overline F(m^n x)}
{(p_n(x)x)^{1-\varepsilon^2}}\Bigr)\Bigr\},
\end{eqnarray*}
which yields
\begin{eqnarray*}
\lambda_{n,n-1} &\le& \lambda_{nn}\Bigl(m+ \frac{m^nx\overline F(m^n x)}
{(p_n(x)x)^{1-\varepsilon^2}}\Bigr)
=(1+\varepsilon)\frac{\log\frac{1}{p_n(x)x}
-2\log\log\frac{1}{p_n(x)x}}
{x\prod_{k=0}^{n-2}\Bigl(m+\frac{m^{k+1}x\overline F(m^{k+1}x)}
{(p_n(x)x)^{1-\varepsilon^2}}\Bigr)},
\end{eqnarray*}
Iterating this estimate $n$ times we finally deduce that
\begin{eqnarray*}
\E\Bigl\{e^{\lambda_{nn} Z_n};\bigcap_{k=0}^{n-1}
\overline{A_k(x)}\Bigr\}
&\le& \E e^{\lambda_{n0}}=e^{\lambda_{n0}}
= \exp\Bigr\{\frac{1+\varepsilon}{x}\log\frac{1}{p_n(x)x}\Bigr\}.
\end{eqnarray*}
From here and \eqref{devs},
\begin{eqnarray*}
\frac{\E\bigl\{e^{\lambda_{nn} Z_n};\bigcap_{k=0}^{n-1}\overline{A_k(x)}\bigr\}-1}
{e^{\lambda_{nn} m^n x}-1}
&\le& c_2x^{-1}(p_n(x)x)^{1+\varepsilon/2}\log\frac{1}{p_n(x)x}\\
&\le& c_3x^{-1}(p_n(x)x)^{1+\varepsilon/4}\\
&=& c_3p_n(x)(p_n(x)x)^{\varepsilon/4}=o(p_n(x))
\end{eqnarray*}
and \eqref{T10.ttancated.1} is also proven.
This completes the proof of Theorem \ref{T10}.

\section*{Acknowledgment}

We are greatly grateful to both anonymous referees who
pointed out some incorrectness in the first version of the paper
as well as for their very positive comments.

\end{document}